\documentclass[11pt]{article}
\usepackage[pdfstartview=FitH, bookmarksnumbered=true,bookmarksopen=true, colorlinks=true, pdfborder=001, citecolor=blue, linkcolor=blue,urlcolor=blue]{hyperref}
\usepackage[round,authoryear]{natbib}
\usepackage{xcolor}
\usepackage{amsmath}
\usepackage{amsthm}
\usepackage{amssymb}
\usepackage{abstract}
\usepackage{mathrsfs}
\usepackage{graphicx}
\usepackage[title]{appendix}
\usepackage{cases}
\usepackage{caption}
\usepackage{csquotes}
\usepackage{amsmath}
\usepackage[ruled,linesnumbered]{algorithm2e}
\usepackage[margin=1in]{geometry}
\allowdisplaybreaks[4]
\newtheorem{theorem}{Theorem}[section]
\newtheorem{assumption}[theorem]{Assumption}

\newtheorem{definition}[theorem]{Definition}

\newtheorem{lemma}[theorem]{Lemma}

\newtheorem{remark}[theorem]{Remark}
\numberwithin{equation}{section}

\renewcommand{\d}{\mathrm{d}}

\newcommand{\E}{\mathbf{E}}
\renewcommand{\P}{\mathbf{P}}

\newcommand{\1}{\mathbf{1}}

    \title{Mean Field Competition of Optimal Switching: The Vanishing Entropy Regularization Approach}
    \author{Zongxia Liang\thanks{Department of Mathematical Sciences, Tsinghua University, Beijing, China. Email:\url{ liangzongxia@tsinghua.edu.cn}}
    \and 
    Shu Wang\thanks{Department of Mathematical Sciences, Tsinghua University, Beijing, China. Email:\url{shu-wang24@mails.tsinghua.edu.cn}}
    \and Xiang Yu\thanks{Department of Applied Mathematics,  The Hong Kong Polytechnic University, Kowloon, Hong Kong. Email:\url{xiang.yu@polyu.edu.hk}}
    }
\date{ }

\begin{document}
    \maketitle

\vspace{-0.2in}
\begin{abstract}
       This paper studies a type of rank-based mean field game in which competing agents strategically switch among multiple effort regimes. We propose an entropy regularized auxiliary problem where the switching decisions are randomized to the control of transition probability for a continuous-time finite-state Markov chain. We first establish the existence of regularized equilibrium in this auxiliary problem. Assuming the convexity of reward scheme, we then prove that the equilibrium is unique and can be approximated by a fictitious play iteration scheme. Furthermore, as the entropy regularization vanishes, we establish the convergence analysis of the regularized equilibrium towards the relaxed equilibrium in the original MFG of optimal switching. The uniqueness of the population ranking distribution under the relaxed equilibrium is also obtained given a strictly convex reward scheme.\\

    \noindent
    \textbf{Keywords:} Rank-based mean field game; optimal switching; mean-field equilibrium; entropy regularization; convergence analysis; fictitious play 
\end{abstract}

\section{Introduction}
Since the seminal studies of \cite{lasry2007} and \cite{huang2006}, remarkable advancements and pervasive applications have been seen in the field of mean field game (MFG) thanks to its compelling tractability and successful approximations of large stochastic systems. As the main appeal of MFG, the impact of an individual agent on the population aggregation is negligible, finding a mean-field equilibrium reduces to solving a stochastic control problem for a representative agent for a given population aggregation, together with a mean-field consistency condition. As an important class of MFG problems, the rank-based MFG, also called the \textit{mean-field competition}, is mainly motivated by dynamic games in R$\&$D and patent protection. In this context, the first-ranked player receives the most reward and subsequent ranks obtain smaller rewards. This type of rank-based reward allocation is often viewed as a special relative performance criterion. Along this line, fruitful studies can be found in recent years either based on the ranking over the completion time or the ranking of terminal position. For instance, \cite{nutz2019} investigated the mean field competition on completion time in a Poissonian model as well as the extension to the principal-agent problem; \cite{bayraktar2019} analyzed the mean-field competition on the completion time as the hitting time of controlled diffusion process and established the existence of mean-field equilibrium and an approximate Nash equilibrium in the finite-player game; \cite{yu2021} formulated a teamwise mean-field competition where each team member contributes to the common Poisson project process and the rank-based competition on the completion time occurs among all teams; \cite{bayraktar2021} examined the mean-field competition based on the ranking of the terminal value in a diffusion model and obtained the mean-field equilibrium using the Schr\"{o}dinger bridges approach; \cite{Wendt} considered the mean-field competition on terminal values with diffusion control; \cite{Tse} examined the mean-field competition for portfolio selection with incomplete information.

In the present work, we consider a class of mean-field competition on completion time where the representative agent strategically switches among finitely many effort levels for the Poissonian project process, subjecting to additional switching costs. Unlike the previous studies of rank-based MFG with smooth or Lipschitz continuous regular controls, the decision making as optimal switching in the MFG problem poses several new technical challenges such as the strict control or pure strategy mean-field equilibrium may not exist in general. To overcome these technical obstacles, we propose to first consider the auxiliary formulation under entropy regularization such that the dynamic switching decision is randomized as the control of generator matrix for an associated continuous-time finite-state Markov chain. Under the umbrella of entropy regularization, we essentially encounter a finite-state rank-based MFG (see \citet{hofgard2026} for a brief review on finite-state MFG), for which we are able to exercise the compactification arguments and Schauder fixed point to conclude the existence of regularized mean-field equilibrium. To fully address the original MFG of optimal switching, we carry out a technical argument as uniform convergence of the regularized problems as entropy regularization vanishes, i.e., the entropy parameter $\eta\rightarrow 0$. The ultimate goal is to show that the limit of the regularized mean-field equilibrium constitutes the relaxed mean-field equilibrium in the original rank-based MFG problem.

In the context of continuous-time reinforcement learning (RL), the entropy regularization was first proposed by \cite{wang2020reinforcement} to encourage the exploration during the learning procedure by its induced action randomization in the continuous-time setting. Since then, entropy regularization has received an upsurge of interest in continuous-time RL for single agent's control as well as mean field control and game problems, see, for instance, \cite{jia2022policy_gradient_continuous_time}, \cite{jia2023q}, \cite{dai2023learning}, \cite{dong}, \cite{bo2025optimal}, \cite{huang2025}, \cite{guo2022entropy}, \cite{pham25}, \cite{wei2025continuous}, just to name a few. 

Apart from its application in continuous-time RL, entropy regularization has also attracted considerable attention in different game problems because of the explicit Gibbs-form characterization of the best response control, which simplifies the fixed-point arguments significantly in the regularized problem. By passing to the limit in some proper sense as entropy regularization tends to zero, one may further verify that the limiting solution also fulfills the fixed point condition in the problem without entropy, thereby concluding the existence of equilibrium in the original game problem.  
\cite{bayraktar2025relaxed} initiated this approach in studying time-inconsistent MDPs under general discounting and obtained the existence of relaxed equilibrium for the intra-personal game in both discrete-time and continuous-time models; \cite{YuYuan26} investigated the time-inconsistent mean field stopping problem for a social planner in the discrete-time model and proved the existence of relaxed equilibria; \cite{YZZZ} utilized the vanishing entropy regularization approach in the major player's problem and established the existence of mean field equilibrium for the discrete-time major-minor MFG of stopping; \cite{dianetti2025} studied the continuous-time MFG of stopping and utilized the linear programming over occupation measures and the vanishing entropy regularization to confirm the existence of mean field equilibrium; \cite{WYZZ2026} proved that the classical solutions of the EEHJB equations converge, up to a subsequence, to a strong solution of the original EHJB equation as entropy tends to zero, and developed new verification arguments to conclude the existence of an equilibrium in the original problem; \cite{BWYZ26} further refined the vanishing entropy regularization approach to study the time-inconsistent MFGs in the diffusion model.  

The theoretical contributions of this paper are summarized as follows:
    \begin{itemize}
        \item [(i)] To the best of our knowledge, rank-based MFG of optimal switching has not been studied in the literature. The present work fills this gap. Due to the lack of pure-strategy mean-field equilibrium in general, we turn to the existence of relaxed mean-field equilibrium, which is modeled by probability measures on the Skorohod path space. 
        \item [(ii)] By introducing the auxiliary entropy regularized problem, we draw an interesting connection between MFG of optimal switching and the classical theory of finite-state MFG in our context. Such problem is characterized by a system consisting of a forward Kolmogorov equation \eqref{eq13} and a backward HJB equation \eqref{HJB}. We establish the existence, uniqueness, and stability of the solution to the system (Theorem \ref{ex_un} and \ref{stability}), which together with Schauder's fixed point theorem ensure the existence of the regularized mean-field equilibrium (Theorem \ref{existence}).
        \item [(iii)] We carry out the rigorous convergence analysis of the regularized mean-field equilibrium towards the relaxed equilibrium in the original rank-based MFG as the entropy parameter vanishes. To accomplish this, we recast the relaxed equilibrium to a martingale condition \eqref{eq7} and a consistency condition \eqref{proportion_flow} using the classical variational inequality approach of optimal switching problems (Theorem \ref{comparison} and \ref{verification}), which are then verified by a weak convergence argument on the canonical space (Theorem \ref{convergence_original}).  
        \item [(iv)] It is interesting to observe that the renowned Lasry-Lions monotonicity condition reduces to a simple convexity condition on the reward scheme. Under such assumption, we obtain the uniqueness of equilibrium (Theorem \ref{ultimate_uniqueness}) as well as the convergence of a fictitious play iteration scheme for the regularized problem (Theorem \ref{convergence_algorithm} and \ref{uniqueness}).
    \end{itemize}

The rest of the paper is organized as follows. Section \ref{sec:form} introduces the problem formulation and preliminary results of mean-field competition of optimal switching, together with the definition of the relaxed mean-field equilibrium. Section \ref{sec:regularization} presents the auxiliary problem under entropy regularization. With the help of the Gibbs-measure characterization of the best response control, the existence of regularized mean-field equilibrium and the iteration convergence of fictitious play are rigorously established. Section \ref{sec:convergence} confirms the convergence as entropy regularization vanishes, showing that the regularized mean-field equilibrium converges to the relaxed mean-field equilibrium in the original MFG problem of optimal switching.

\section{Problem Formulation}\label{sec:form}
	 Let $(I,\mathcal{I},\mu)$ be an atomless complete probability space, each $i\in I$ represents an agent in the population. Let $(\Omega,\mathcal{F},\mathbb{P})$ be the sample space. Consider a family of essentially pairwise independent random variables $\mathcal{E}^i\sim\text{Exponential(1)}$ defined on the rich Fubini-extension of $(I\times \Omega,\mathcal{I}\otimes\mathcal{F},\mu\otimes\mathbb{P})$ such that the exact law of large number (see \citet{sun2006} for details) holds. Given locally integrable function $\theta_i:[0,+\infty)\rightarrow (0,+\infty)$, where $\theta_i(t)$ is interpreted as the effort level that a representative agent $i$ chooses to take at time $t$. Define the random time$$\tau^i_{\theta_i}=\inf\left\{t:\int_0^t \theta_i(s)\d s=\mathcal{E}^i\right\}.$$
	We call $\tau_{\theta_i}^i$ the arrival time of agent $i$ with effort process $\theta_i$, which can also be interpreted as the first jump time of an inhomogeneous Poisson process with intensity $\theta_i$. Each agent receives the reward according to his ranking of arrival. To be specific, define $\rho=\{\rho(t)\}_{t\in [0,+\infty)}$ by $$\rho(t)=\mu(i:\tau_{\theta_i}^i\le t),$$
    which represents the proportion of agents that have arrived before time $t$. Henceforth, $\rho$ will be referred to as the aggregate progress of the competition. If $\tau_{\theta_i}^i=t$, then agent $i$ receives a reward $R(\rho(t))$, where $R:[0,1]\rightarrow[0,+\infty)$ is a given reward scheme. 
    
    In contrast to \citet{nutz2019}, we consider a model where each representative agent chooses his effort level from a finite set $\mathbb{U}=\{u_1,...,u_K\}\subset (0,+\infty)$. That is, each agent strategically switches across finite options of effort levels. Define the set $\mathcal{A}$ of admissible switching controls as the set of double sequences $(\sigma_k,\kappa_k)_{k\ge 0}$, where $(\sigma_k)_{k\ge 0}$ is an increasing sequence with $\sigma_0=0,\;\underset{n\rightarrow +\infty}{\lim}\sigma_n=+\infty$ ($\sigma_n=+\infty$ for some $n$ is allowed. In such case, define $\sigma_{n'}=+\infty$ for all $n'>n$), and $\kappa_n\in \mathbb{U}$ with $\kappa_n\neq\kappa_{n-1}$ whenever $\sigma_n<+\infty$. Two double sequences $(\sigma_n,\kappa_n)_{n\ge 0}$ and $(\tilde{\sigma}_n,\tilde{\kappa}_n)_{n\ge 0}$ are identified if $\sigma_n=\tilde{\sigma}_n,\;\forall n\ge 0$ and $\kappa_n=\tilde{\kappa}_n$ whenever $\sigma_n<+\infty$. Each admissible switching control corresponds to an effort process $\theta\in D([0,+\infty),\mathbb{U})$ given by $\theta(t)=\sum_{n=0}^{+\infty}\kappa_n\1_{[\sigma_n,\sigma_{n+1})}(t),t\ge 0$,
    where $D([0,+\infty),\mathbb{U})$ is the Skorokhod space consisting of all c\`{a}dl\`{a}g paths $\theta:[0,+\infty)\rightarrow \mathbb{U}$.
	The mapping $(\sigma_n,\kappa_n)_{n\ge 0}\mapsto \theta$ is clearly injective from $\mathcal{A}$ to $D([0,+\infty),\mathbb{U})$. It is in fact bijective because according to \citet[lemma 12.1]{billingsley2013}, any path $\theta\in D([0,+\infty),\mathbb{U})$ admits at most finite jumps over any bounded interval $[a,b]\subset [0,+\infty)$. Henceforth, we use $\theta\in D([0,+\infty),\mathbb{U})$ to represent an admissible switching strategy.

    \begin{remark}
        That $0\notin \mathbb{U}$ is a technical and crucial assumption. It guarantees the exponential dissipativity of the dynamic programming equations \eqref{HJBVI} and \eqref{HJB}, leading to the uniqueness and stability of the bounded solution without specifying the boundary condition at $t=+\infty$. It also enhances the overall regularity of our problem by ensuring that $1-\rho(t)$ decays to zero exponentially fast, a fact that we will utilize several times throughout the paper.
    \end{remark}

    \subsection{Optimal switching by a representative agent}
   In this subsection, let us consider the representative agent's problem of optimal switching by
    taking the population aggregation $\rho\in\mathcal{D}$ as exogenously given. Here, $$\mathcal{D}=\{\rho\in C([0,+\infty)):\rho(0)=0, \rho(t)\le 1, 0\le\rho(t)-\rho(s)\le \bar{u}(t-s),\forall t\ge s\ge 0\},$$
    where $\overline{u}\triangleq\underset{k=1,2,\cdots,K}{\max}\{u_k\}$ and $C([0,+\infty))$ is the set of all continuous functions $[0,\infty)\rightarrow\mathbb{R}$. Also define $\underline{u}\triangleq\underset{k=1,2,\cdots,K}{\min}\{u_k\}>0$. The representative agent chooses the admissible switching strategy $\theta\in D([0,+\infty),\mathbb{U})$ that maximizes the expected payoff that $$J(\theta)=\E\left[R(\rho(\tau_\theta^i))-\int_0^{\tau_\theta^i}c(\theta(t))\d t-\sum_{n=1}^{+\infty}g_{\kappa_{n-1},\kappa_n}\1_{\{\sigma_n<\tau_\theta^i\}}\right].$$
	Here, $c:\mathbb{U}\rightarrow [0,+\infty)$ is the cost coefficient and $\{g_{j,k}\}_{j,k=1,2,...,K}$ stand for the switching costs. The following assumptions are imposed throughout the paper. 
    \begin{assumption}\label{basic_assumption}
        \begin{itemize}
            \item [(1)] The reward scheme $R$ is a decreasing, Lipschitz continuous function.
            \item [(2)] The switching costs satisfy the following properties: $$g_{j,k}>0,\;\forall j\neq k;\qquad g_{j,j}=0,\;\forall j;\qquad g_{j,k}+g_{k,l}> g_{j,l},\;\forall j\neq k\neq l.$$ 
            The last one is referred to as ``strict triangle inequality".
        \end{itemize}  
    \end{assumption}
    \noindent Let us define the value function $V=\{(V_1(t),\cdots,V_K(t)),t\ge 0\}$ that
    \begin{align*} V_k(t)&=\underset{\theta\in D([0,+\infty),\mathbb{U})}{\sup}\;\E\left[\left.R(\rho(\tau_\theta^i))-\int_t^{\tau_\theta^i}c(\theta(s))\d s-\sum_{n=1}^{+\infty}g_{\kappa_{n-1},\kappa_n}\1_{\{t<\sigma_n<\tau_\theta^i\}}\right|\theta(t)=u_k,\tau_\theta^i>t\right].
	\end{align*}
   If $\theta^*\in D([0,+\infty),\mathbb{U})$ satisfies that $J(\theta^*)=\underset{k}{\max}\{V_k(0)\}$, we say that $\theta^*$ is a best response for a  given $\rho$. By dynamic programming argument, we obtain the system of Hamilton-Jacobi-Bellman (HJB) variational inequalities as
	\begin{equation}\label{HJBVI}
		\min\left\{-V_k'(t)-u_k(R(\rho(t))-V_k(t))+c(u_k),\;V_k(t)-\max_{j\neq k}\{V_j(t)-g_{k,j}\}\right\}=0,\quad k=1,2,\cdots,K.
	\end{equation}
    Note that there may not exist a $C^1$-classical solution to this system in general, we thus resort to the notion of viscosity solution.
	\begin{definition}
		Vector-valued function $V=(V_1,...,V_K):[0,+\infty)\rightarrow \mathbb{R}^K$ is called a viscosity subsolution (resp. supersolution) to the HJB variational inequality \eqref{HJBVI}, if for each $k\in \{1,2,...,K\}$, $V_k$ is upper semi-continuous (resp. lower semi-continuous), and for any test function $\phi\in C^1([0,+\infty))$ such that $t_0\in[0,+\infty)$ is a local maximum (resp. local minimum) of $V_k-\phi$, we have $$\min\left\{-\phi'(t_0)-u_k(R(\rho(t_0))-\phi(t_0))+c(u_k),\;V_k(t_0)-\max_{j\neq k}\{V_j(t_0)-g_{k,j}\}\right\}\le0,$$
		$$\left(\text{resp.}\min\left\{-\phi'(t_0)-u_k(R(\rho(t_0))-\phi(t_0))+c(u_k),\;V_k(t_0)-\max_{j\neq k}\{V_j(t_0)-g_{k,j}\}\right\}\ge0\right).$$
		If $V$ is both a viscosity subsolution and a viscosity supersolution, it is called a viscosity solution.
	\end{definition}
    The following result is standard yet important to solve the optimal switching problem for the representative agent.
    \begin{theorem}\label{comparison}
		The value function $V$ is the unique bounded viscosity solution to \eqref{HJBVI}.
	\end{theorem}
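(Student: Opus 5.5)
The plan has three parts: show that $V$ is bounded and continuous, show that it is a viscosity solution of \eqref{HJBVI} via the dynamic programming principle, and prove a comparison principle for bounded sub- and supersolutions. Uniqueness then follows at once.

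\textbf{Boundedness and regularity.} Since $R\ge 0$ is bounded by $R(0)$ and all costs are nonnegative, each $V_k$ is bounded from above by $R(0)$. Choosing the constant strategy $\theta\equiv u_k$ and using $\tau-t\sim\mathrm{Exp}(u_k)$, we get $V_k\ge -c(u_k)/u_k$, so each $V_k$ is also bounded from below. Conditional on $\{\tau^i_\theta>t\}$, the payoff can be rewritten as a deterministic integral:
\begin{equation*}
\int_t^\infty e^{-\int_t^s\theta}\bigl(\theta(s)R(\rho(s))-c(\theta(s))\bigr)\d s-\sum_n g_{\kappa_{n-1},\kappa_n}e^{-\int_t^{\sigma_n}\theta}\1_{\{\sigma_n>t\}}.
\end{equation*}
In this form the agent's problem is a deterministic optimal switching problem with discount rate $\theta\ge\underline u>0$. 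The $\rho$-dependence enters through the running reward, which is Lipschitz in $s$ because $R$ is Lipschitz and $\rho\in\mathcal D$. Time shifts are handled by shifting the strategy, using the exponential discount, and using that only finitely many switches are worthwhile: each switch costs at least $\min_{j\neq k} g_{j,k}>0$. These give continuity of each $V_k$ in $t$, and even Lipschitz continuity.

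\textbf{Dynamic programming and the viscosity property.} From the representation above I will derive the DPP. For $h>0$, $V_k(t)$ equals the supremum, over strategies on $[t,t+h]$, of the discounted running reward plus the discounted continuation value $V_{\theta(t+h)}(t+h)$, minus switching costs. Switching immediately is always admissible, which gives $V_k\ge \max_{j\ne k}(V_j-g_{k,j})$. Not switching on $[t,t+h]$ gives, via a test-function argument, $-\phi'-u_k(R(\rho)-\phi)+c(u_k)\ge0$ at local minima; together these yield the supersolution property. For the subsolution property, suppose that at $t_0$ both $V_k(t_0)>\max_{j\neq k}(V_j(t_0)-g_{k,j})$ and the differential term is positive. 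By continuity, strict inequality persists on a small interval. Then an optimal (or $\epsilon$-optimal) strategy does not switch early, and the DPP yields a contradiction with the test function. The strict triangle inequality enters here: it ensures that simultaneous multiple switches are suboptimal, so "switch now" reduces to a single switch.

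\textbf{Comparison principle (the main obstacle).} Let $U$ be a bounded subsolution and $W$ a bounded supersolution; I want $U\le W$ on $[0,\infty)$. Because the horizon is infinite with no terminal condition, I will exploit the dissipativity coming from $u_k\ge\underline u>0$, as flagged in the Remark.

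First, I handle the lack of a terminal condition. Set $M=\max_k\sup_t (U_k-W_k)^+$, assume $M>0$, and penalize at infinity by considering $U_k-W_k-\delta t$ (or $-\delta\log(1+t)$); the maximum is then attained at some finite point. Next comes the standard doubling of variables: $\Phi(t,s)=U_k(t)-W_k(s)-|t-s|^2/(2\epsilon)-\delta t$, with the maximum taken over $k$ as well.

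At a maximizer $(\bar t,\bar s,\bar k)$ there are two cases. If the obstacle is active for $U$, that is, $U_{\bar k}\le \max_j(U_j-g_{\bar k,j})$, then some $j$ satisfies $U_j-g\ge U_{\bar k}$. The supersolution gives $W_{\bar k}\ge W_j-g$, so $U_j-W_j\ge U_{\bar k}-W_{\bar k}$, meaning the index $j$ is also maximal. Iterating, the strict triangle inequality and $g>0$ rule out cycles: summing around a cycle would give $0\ge\sum g>0$. Hence eventually we reach an index where the differential inequality holds for $U$. In that case, subtracting the sub- and supersolution differential inequalities gives
\begin{equation*}
u_{\bar k}\bigl(U_{\bar k}(\bar t)-W_{\bar k}(\bar s)\bigr)\le u_{\bar k}\bigl|R(\rho(\bar t))-R(\rho(\bar s))\bigr|+\delta .
\end{equation*}
The first term on the right vanishes as $\epsilon\to0$, because $R\circ\rho$ is uniformly continuous and $|\bar t-\bar s|\to0$. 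Letting $\epsilon\to0$ and then $\delta\to0$ yields $\underline u\,M\le 0$, a contradiction.

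The delicate point is making the index-chasing argument rigorous under doubling, with $j$ chosen consistently at the points $\bar t$ and $\bar s$. I expect to treat it by first perturbing $U$ into a strict subsolution, e.g.\ $U-\eta$, so that strict inequalities can be chained. Combined with the viscosity property of $V$, this comparison result gives that $V$ is the unique bounded viscosity solution.
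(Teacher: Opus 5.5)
Your proposal is correct. Its architecture matches the paper's: bounds and continuity of $V$, the viscosity property from the dynamic programming principle, and a comparison principle via doubling of variables with the penalty $-\delta t$ and a joint maximum over $(k,t,s)$.

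The genuine difference is how the obstacle is neutralized at the maximizer. The paper replaces the supersolution by $(1-\xi)W_k+\xi\min_{j\neq k}g_{j,k}$, which satisfies the obstacle inequality with a strict margin. At the maximizing index the subsolution's obstacle then cannot be active, since some other index would strictly beat it, and the paper finishes with an extra limit $\xi\to0$.

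Your index chase needs no perturbation and is already rigorous as written:
\begin{itemize}
\item Each new index $j$ satisfies $\Phi^j(\bar t,\bar s)\ge\Phi^{\bar k}(\bar t,\bar s)$, so it is again a joint maximizer, and test functions touching $U_j$ at $\bar t$ and $W_j$ at $\bar s$ exist.
\item The supersolution's obstacle inequality at $\bar s$ holds for every $j$ at once, so there is no consistency issue in choosing $j$.
\item $U_{k_i}(\bar t)$ increases by at least $\min_{l\neq j}g_{l,j}>0$ per step, so the chain stops after fewer than $K$ steps.
\end{itemize}
So you can drop the ``delicate point'' and the perturbation $U-\eta$. That perturbation would not help anyway, because subtracting a constant from all components leaves $U_k-\max_{j\neq k}(U_j-g_{k,j})$ unchanged.

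What the paper's perturbation buys is robustness. Its strict margin would absorb an error $g_{k,j}(\bar t)-g_{k,j}(\bar s)$ if the switching costs depended on time, whereas your chase relies on the exact cancellation of constant costs.

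Two minor remarks:
\begin{itemize}
\item Neither comparison argument, nor the viscosity property, uses the strict triangle inequality; only $g_{j,k}>0$ is needed. In a c\`adl\`ag path a jump is a single switch, so ``simultaneous multiple switches'' never arise. The paper needs the triangle inequality only for tightness in Section~4.
\item Your time-shift argument yields directly $|V_k(t+h)-V_k(t)|\le r\overline{u}h$, with $r$ the Lipschitz constant of $R$. This holds because $\int_t^\infty e^{-\int_t^s\theta}\theta(s)\,\d s=1$ and the switching costs are matched exactly under the shift, so no appeal to finitely many switches is needed. This supplies the continuity that the paper simply asserts.
\end{itemize}
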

	\begin{proof}
		We first show that the bounded viscosity solution to \eqref{HJBVI} is unique. To this end, it suffices to prove the following comparison principle: Let $\{\overline{V_k}\}_{k=1}^K$ be a bounded viscosity supersolution and $\{\underline{V_k}\}_{k=1}^K$ be a bounded viscosity subsolution, then $\overline{V_k}(t)\ge \underline{V_k}(t)$ for all $k=1,2,\cdots ,K$ and $t\in [0,+\infty)$. We adopt a method similar to \citet{el2013} and argue by contradiction. Suppose otherwise that $\overline{V_k}(t)< \underline{V_k}(t)$ for some $t\in[0,+\infty)$ and $k\in\{1,2,\cdots, K\}$. Let $$\underset{t\in[0,+\infty)}{\sup}\left\{\underline{V_k}(t)-\overline{V_k}(t)\right\}=4\eta>0.$$
		Fix $\beta>0$ sufficiently small such that $\underset{t\in[0,+\infty)}{\sup}\left\{\underline{V_k}(t)-\overline{V_k}(t)-\beta t\right\}\ge3\eta$.
		For any $\varepsilon>0$ and $k\in\{1,2,\cdots, K\}$, consider $$\phi^k(t,s)=\underline{V_k}(t)-\overline{V_k}^\xi(s)-\frac{1}{2\varepsilon}(t-s)^2-\beta t,\quad t,s>0,$$
		where $$\overline{V_k}^\xi(s)=(1-\xi)\overline{V_k}(s)+\xi\underset{j\neq k}{\min}\{g_{j,k}\},$$
        and $\xi$ is sufficiently small such that $\underset{k,s}{\sup}\;|\overline{V_k}^\xi(s)-\overline{V_k}(s)|\le\eta$.
		The boundedness of $\overline{V}$ and $\underline{V}$ implies that $\underset{t+s\rightarrow+\infty}{\lim}\phi^k(t,s)=-\infty$. Therefore, we can find $(k_0,t_0,s_0)\in\underset{\begin{subarray}{l}k\in\{1,2,\cdots,K\}\\t,s\in[0,+\infty)\end{subarray}}{\text{argmax}}\phi^k(t,s)$ such that $\phi^{k_0}(t_0,s_0)\ge 2\eta$. Consequently, $\underline{V_{k_0}}(t_0)-\overline{V_{k_0}}(s_0)\ge\eta.$
		By \citet[proposition 3.7]{crandall1992}, we have $|t_0-s_0|\rightarrow 0$ and $\frac{1}{2\varepsilon}|t_0-s_0|^2\rightarrow 0$ as $\varepsilon\rightarrow0$. 
        % For each $k\in\{1,2,\cdots,K\}$, take $(t_0,s_0)\in \argmax \phi^k(t,s)$, the user's guide gives the desired property $\frac{1}{2\varepsilon}|t_0-s_0|^2\rightarrow 0$ as $\varepsilon\rightarrow 0$.
		%For the rest of the proof, let us assume that $\varepsilon$ is sufficiently small such that $\frac{1}{2\varepsilon}|t_0-s_0|^2<\eta$. Then there exists a constant $\xi_0>0$ such that $\underline{V_{k_0}}(t_0)-\overline{V_{k_0}}(s_0)>\eta$ whenever $\xi\in (0,\xi_0)$, regardless of the choice of $\varepsilon$.
        
		Next, we claim that $$\underline{V_{k_0}}(t_0)-\underset{j\neq k_0}{\max}\{\underline{V_j}(t_0)-g_{k_0,j}\}>0.$$
		Suppose not, then there exists some $k_1\neq k_0$ such that $\underline{V_{k_0}}(t_0)\le \underline{V_{k_1}}(t_0)-g_{k_0,k_1}$. Note that \begin{align*}
			\overline{V_{k_0}}^\xi(s_0)&-\underset{j\neq k_0}{\max}\{	\overline{V_{j}}^\xi(s_0)-g_{k_0,j}\}\\
			&=(1-\xi)\overline{V_{k_0}}(s_0)+\xi \underset{l\neq k_0}{\min}\{g_{l,k_0}\}-\underset{j\neq k_0}{\max}\{(1-\xi)\overline{V_{j}}(s_0)+\xi\underset{m\neq j}{\min}\{g_{m,j}\}-g_{k_0,j}\}\\
			&\ge (1-\xi)(\overline{V_{k_0}}(s_0)-\underset{j\neq k_0}{\max}\{\overline{V_{j}}(s_0)-g_{k_0,j}\})+\xi(\underset{l\neq k_0}{\min}\{g_{l,k_0}\}-\underset{j\neq k_0}{\max}\{\underset{m\neq j}{\min}\{g_{m,j}\}-g_{k_0,j}\}).	
		\end{align*}
		The first term is non-negative by the fact that $\overline{V}$ is a viscosity supersolution. The second term is strictly positive because $\underset{m\neq j}{\min}\{g_{m,j}\}-g_{k_0,j}\le 0$ holds for all $j\neq k_0$. Therefore, we obtain $$\overline{V_{k_0}}^\xi(s_0)-\overline{V_{k_1}}^\xi(s_0)+g_{k_0,k_1}>0,$$ and hence $$\underline{V_{k_0}}(t_0)-\overline{V_{k_0}}^\xi(s_0)-(\underline{V_{k_1}}(t_0)-\overline{V_{k_1}}^\xi(s_0))<0,$$
		which contradicts the definition of $k_0$. It follows from the definition of viscosity supersolution and viscosity subsolution that
		\begin{equation*}\label{eq1}
			-u_{k_0}(R(\rho(t_0))-\underline{V_{k_0}}(t_0))-\frac{1}{\varepsilon}(t_0-s_0)-\beta+c(u_{k_0})\le 0,
		\end{equation*}
		$$-(1-\xi)u_{k_0}(R(\rho(s_0))-\overline{V_{k_0}}(s_0))-\frac{1}{\varepsilon}(t_0-s_0)+(1-\xi)c(u_{k_0})\ge 0.$$
		Subtracting the first inequality by the second one and letting $\xi\rightarrow 0$, we obtain $$-u_{k_0}(R(\rho(t_0))-R(\rho(s_0)))+u_{k_0}(\underline{V_{k_0}}(t_0)-\overline{V_{k_0}}(s_0))-\beta\le 0$$
		Sending $\varepsilon\rightarrow 0$ and $\beta\rightarrow 0$, and noting that $u_{k_0}\ge \underline{u}>0$, we arrive at $\eta\le 0$, 
        % $u_{k_0}\eta<u_{k_0}(\underline{V_{k_0}}(t_0)-\overline{V_{k_0}}(s_0))\le 0$, the first inequality holds despite that $t_0,s_0$ depends on $\varepsilon$.
        a contradiction. As a result, $\overline{V_k}(t)\ge \underline{V_k}(t)$ holds for all $t\in[0,+\infty)$ and $k\in \{1,2,\cdots,K\}$, which competes the proof of the comparison principle.

        We next prove that the value function is indeed a bounded viscosity solution. The value function is obviously bounded and continuous. 
        It remains to verify that it satisfies the definition of viscosity solution. For notational convenience, define $$J(\theta,t)=\E\left[\left.R(\rho(\tau_{\theta}^i))-\int_{t}^{\tau_{\theta}^i}c({\theta}(s))\d s-\sum_{n=1}^{+\infty}g_{\kappa_{n-1},\kappa_n}\1_{\{t<\sigma_n<\tau_{\theta}^i\}}\right|\tau_{\theta}^i>t\right]$$
        such that $$V_k(t)=\underset{\theta\in D([0,+\infty),\mathbb{U}),\theta(t)=u_k}{\sup}J(\theta,t).$$
        We start by proving that $V$ is a viscosity supersolution. Suppose that $\phi\in C^1([0,+\infty))$ and $V_k-\phi$ attains minimum at $t_0\in [0,+\infty)$. Without loss of generality, we may assume that $V_k(t_0)=\phi(t_0)$.  Choose an arbitrary effort process such that $\theta(t)\equiv u_k$ for $t\in [t_0,t_0+h]$. From the independent increment property of Poisson process, it follows that 
        \fontsize{10pt}{10pt}\selectfont
		\begin{align*}
			V_k(t_0)\ge J(\theta,t_0)&=\E\left[\left.\int_{t_0}^{(t_0+h)\wedge \tau_\theta^i}-c(u_k)\d t+R(\rho(\tau_\theta^i))\1_{\{\tau_\theta^i\le t_0+h\}}+J(\theta,t_0+h)\1_{\{\tau_\theta^i\ge t_0+h\}}\right|\tau_\theta^i>t_0\right]\\
            &=\int_{t_0}^{t_0+h} u_k e^{-u_k(s-t_0)} \left[ R(\rho(s))-c(u_k)(s-t_0)\right]\d s+e^{-u_k h}\left[ J(\theta,t_0+h)-c(u_k)h\right].
		\end{align*}
        \fontsize{11pt}{11pt}\selectfont
        Taking supremum over $\theta$ gives that \begin{align*}
            \phi(t_0)=V_k(t_0)&\ge \int_{t_0}^{t_0+h} u_k e^{-u_k(s-t_0)} \left[ R(\rho(s))-c(u_k)(s-t_0)\right]\d s+e^{-u_k h}\left[ V_k(t_0+h)-c(u_k)h\right]\\
			&\ge \int_{t_0}^{t_0+h} u_k e^{-u_k(s-t_0)} \left[ R(\rho(s))-c(u_k)(s-t_0)\right]\d s+e^{-u_k h}\left[ \phi(t_0+h)- c(u_k)h\right].
        \end{align*}
        Rearranging the above inequality, we obtain $$\frac{\phi(t_0)-e^{-u_k h}\phi(t_0+h)}{h}\ge\frac{1}{h} \int_{t_0}^{t_0+h} u_k e^{-u_k(s-t_0)}\left[ R(\rho(s))-c(u_k)(s-t_0)\right] \d s-c(u_k)e^{-u_k h}.$$
		Letting $h\rightarrow 0$, we immediately deduce that
		\begin{equation}\label{super}
			-\phi'(t_0)-u_k(R(\rho(t_0))-\phi(t_0))+c(u_k)\ge 0.
		\end{equation}
		The inequality $V_k(t_0)-\underset{j\neq k}{\max}\{V_j(t_0)-g_{k,j}\}\ge 0$ can be obtained in a similar fashion by considering effort processes that switch from $u_k$ to $u_j$ at $t=t_0$.
		
		We then proceed to show that $V$ is a viscosity subsolution. Let $\phi\in C^1([0,+\infty))$ be such that $V_k-\phi$ attains maximum at $t_0\in [0,+\infty)$ and $V_k(t_0)=\phi(t_0)$. If $V_k(t_0)-\underset{j\neq k}{\max}\{V_j(t_0)-g_{k,j}\}\le 0$, it trivially holds that $$\min\left\{-\phi'(t_0)-u_k(R(t_0)-\phi(t_0))+c(u_k),\;V_k(t_0)-\max_{j\neq k}\{V_j(t_0)-g_{k,j}\}\right\}\le0.$$
		Suppose not, then there exists $\nu>0$ such that $V_k(t)-\max_{j\ne k}\{V_j(t)-g_{k,j}\}>2\nu$. 
        Consider any $\theta\in D([0,+\infty),\mathbb{U})$ such that $\theta(t_0)=u_k$. Denote by $\sigma$ the first jump time of $\theta$ after time $t_0$ and suppose $\theta(\sigma)=u_j$, we have 
        \fontsize{10pt}{10pt}\selectfont
        \begin{align*}
            V_j(t_0)&= V_j(\sigma)+(V_j(t_0)-V_j(\sigma)) \\
            & \ge J(\theta,\sigma)+(V_j(t_0)-V_j(\sigma))\\
            & = c(u_k)(\sigma-t_0)+g_{k,j}+e^{u_k(\sigma-t_0)}
            \left(J(\theta,t_0)-\int_{t_0}^\sigma u_ke^{-u_k(s-t_0)}(R(\rho(s))-c(u_k)(s-t_0))\d s\right)\\
            &\quad+(V_j(t_0)-V_j(\sigma)).
            \end{align*}
        \fontsize{11pt}{11pt}\selectfont
        By the continuity of value function, we can find $\delta\in (0,\min\{\nu,1\})$ such that $V_j(t_0)\ge J(\theta,t_0)+g_{k,j}-\nu$ whenever $\sigma\in[t_0,t_0+\delta]$. 
        % This is exactly how to prove the continuity of value function.
        Let us then fix any small time step $h< \delta$ and find an $h^2$-optimal strategy $\theta^*\in D([0,+\infty),\mathbb{U})$ in the sense that $\theta^*(t_0)=u_k$ and $V_k(t_0)-h^2\le J(\theta^*,t_0)$. If the first jump time of $\theta^*$ after time $t_0$ is less than $t_0+h$, then $$V_j(t_0)\ge J(\theta^*,t_0)+g_{k,j}-\nu\ge V_k(t_0)+g_{k,j}-2\nu,$$
        yielding a contradiction. Therefore, $\theta^*(t)$ must stay at regime $u_k$ for $t\in[t_0,t_0+h]$, and we obtain 
        \fontsize{10pt}{10pt}\selectfont
        \begin{align*}
            \phi(t_0)-h^2 & \le V_k(t_0)-h^2 \\& \le \E\left[\left.\int_{t_0}^{(t_0+h)\wedge \tau_{\theta^*}^i}-c(u_k)\d t+R(\rho(\tau_{\theta^*}^i))\1_{\{\tau_{\theta^*}^i\le t_0+h\}}+J(\theta^*,t_0+h)\1_{\{\tau_{\theta^*}^i\ge t_0+h\}}\right|\tau_{\theta^*}^i>t_0\right]\\
            &\le \E\left[\left.\int_{t_0}^{(t_0+h)\wedge \tau_{\theta^*}^i}-c(u_k)\d t+R(\rho(\tau_{\theta^*}^i))\1_{\{\tau_{\theta^*}^i\le t_0+h\}}+V_k(t_0+h)\1_{\{\tau_{\theta^*}^i\ge t_0+h\}}\right|\tau_{\theta^*}^i>t_0\right]\\
            &=\int_{t_0}^{t_0+h} u_k e^{-u_k(s-t_0)}\left[ R(\rho(s))-c(u_k)(s-t_0)\right]\d s + e^{-u_k h}\left[V_k(t_0+h)-c(u_k)h\right]\\
        &\le\int_{t_0}^{t_0+h} u_k e^{-u_k(s-t_0)}\left[ R(\rho(s))-c(u_k)(s-t_0)\right]\d s + e^{-u_k h}\left[\phi(t_0+h)-c(u_k)h\right].
        \end{align*}
        \fontsize{11pt}{11pt}\selectfont
        Rearranging the above inequality leads to $$\frac{\phi(t_0)-e^{-u_k h}\phi(t_0+h)-h^2}{h}\le\frac{1}{h} \int_{t_0}^{t_0+h} u_k e^{-u_k(s-t_0)}\left[ R(\rho(s))-c(u_k)(s-t_0)\right] \d s-c(u_k)e^{-u_k h}.$$ Letting $h\rightarrow 0$, we obtain the reverse inequality of \eqref{super}, which concludes the desired result.
	\end{proof}
    
	The following verification theorem gives a martingale characterization of the best response control for the representative agent. 
    \begin{theorem}\label{verification}
        Let $V$ be the unique bounded viscosity solution to HJB variational inequality \eqref{HJBVI}. Define the discounted value evolution functional on $D([0,+\infty),\mathbb{U})$ by
	\begin{equation*}
		Y_t(\theta)=V_{\theta(t)}(t)e^{-\int_0^t\theta(s)\d s}+\int_0^t e^{-\int_0^s \theta(\tau) d\tau}[\theta(s) R(\rho(s))-c(\theta(s))]\d s-\sum_{0<\sigma_n \le t} e^{-\int_0^{\sigma_n}\theta(s)\d s} g_{\theta(\sigma_n-),\theta(\sigma_n)},
	\end{equation*}
	where $\sigma_n$ is the n-th jump time of path $\theta\in D([0,+\infty),\mathbb{U})$. Then $Y_t(\theta)$ is a non-increasing function of $t\in [0,+\infty)$, and $\theta$ is the best response given $\rho$ if and only if $Y_t(\theta)\equiv \underset{k}{\max}\{V_k(0)\}$.
    \end{theorem}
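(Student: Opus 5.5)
The plan has three steps. First show that $t\mapsto Y_t(\theta)$ is non-increasing. Then show that $Y_t(\theta)\to J(\theta)$ as $t\to+\infty$. Finally combine the two to characterize best responses.

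\textbf{Step 1 (between jumps).} Fix $\theta$ and work on an interval $[\sigma_n,\sigma_{n+1})$ where $\theta\equiv u_k$. Writing $\Lambda(t)=\int_0^t\theta(s)\d s$, we have there
\[
Y_t(\theta)=V_k(t)e^{-\Lambda(t)}+\int_0^t e^{-\Lambda(s)}[u_kR(\rho(s))-c(u_k)]\d s-(\text{constant}).
\]
By Theorem \ref{comparison}, $V_k$ is a viscosity solution of \eqref{HJBVI}, so in particular a viscosity supersolution of
\[
-V_k'-u_k(R(\rho)-V_k)+c(u_k)\ge 0 .
\]
This is a first-order linear inequality in one time variable. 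Multiplying by the positive factor $e^{-\Lambda(t)}$, it says that $W(t):=e^{-\Lambda(t)}V_k(t)+\int_0^t e^{-\Lambda(s)}[u_kR(\rho(s))-c(u_k)]\d s$ is a viscosity supersolution of $-W'\ge 0$. To check this, take a test function $\psi$ touching $W$ from below and set $\phi=e^{\Lambda}\bigl(\psi-\int_0^\cdot\cdots\bigr)$, which is $C^1$ on the interval. The viscosity inequality $-\psi'\ge 0$ then follows from $-\phi'-u_k(R(\rho)-\phi)+c(u_k)\ge 0$. A continuous viscosity supersolution of $-W'\ge0$ is non-increasing; the standard argument compares $W$ with $W(t_1)-\epsilon(t-t_1)$. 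Hence $Y$ is non-increasing on each interval between jumps.

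\textbf{Step 1 (at jumps).} At a jump time $\sigma_n$ from $u_j$ to $u_k$, the jump of $Y$ is
\[
e^{-\Lambda(\sigma_n)}\bigl(V_k(\sigma_n)-V_j(\sigma_n)-g_{j,k}\bigr)\le 0,
\]
by the obstacle part of \eqref{HJBVI}, which holds pointwise since $V$ is continuous. Jumps are locally finite, and $Y$ is right-continuous with left limits, so $Y$ is non-increasing on $[0,+\infty)$.

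\textbf{Step 2.} Given $\rho$, the arrival time has conditional survival probability $\P(\tau^i_\theta>t)=e^{-\Lambda(t)}$ and density $\theta(t)e^{-\Lambda(t)}$. Hence
\[
J(\theta)=\int_0^\infty e^{-\Lambda(s)}[\theta(s)R(\rho(s))-c(\theta(s))]\d s-\sum_n e^{-\Lambda(\sigma_n)}g_{\theta(\sigma_n-),\theta(\sigma_n)} ,
\]
since $\P(\sigma_n<\tau)=e^{-\Lambda(\sigma_n)}$ and $\int_0^\tau c=\int_0^\infty \1_{\{s<\tau\}}c\,\d s$. Because $\Lambda(t)\ge\underline{u}t$ and $V$ is bounded, $V_{\theta(t)}(t)e^{-\Lambda(t)}\to0$, so $\lim_{t\to\infty}Y_t(\theta)=J(\theta)$. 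The switching sum may diverge to $-\infty$, but it is monotone, and in that case $J(\theta)=-\infty$, which is consistent. Also $Y_0(\theta)=V_{\theta(0)}(0)\le\max_kV_k(0)$, and the value function satisfies $\max_kV_k(0)=\sup_\theta J(\theta)$, because $\tau>0$ a.s.\ and the sup over $\theta(0)=u_k$ is then taken over $k$.

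\textbf{Step 3.} Monotonicity gives
\[
J(\theta)=\lim_{t\to\infty} Y_t(\theta)\le Y_0(\theta)\le \max_kV_k(0).
\]
If $Y_t\equiv\max_kV_k(0)$, then $J(\theta)=\max_kV_k(0)$, so $\theta$ is a best response. Conversely, if $J(\theta)=\max_kV_k(0)$, then all the inequalities above are equalities, and monotonicity forces $Y_t$ to be constant and equal to $\max_kV_k(0)$.

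The main obstacle is Step 1's reduction from the viscosity inequality to monotonicity of $W$, since $V_k$ is merely continuous. The test-function transformation is routine in one dimension. A cleaner alternative is to note that $V$ is Lipschitz: by the Lipschitz property of $R\circ\rho$ and the definition of the value, $V$ is a.e.\ differentiable, and the supersolution inequality holds a.e. Then $W$ is absolutely continuous with $W'\le0$ a.e., which gives monotonicity directly.
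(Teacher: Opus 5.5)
Your proposal is correct and follows essentially the paper's route: downward jumps of $Y$ from the obstacle part of \eqref{HJBVI}, and monotonicity between jumps by pushing test functions through the integrating factor so that the supersolution inequality becomes $-\psi'\ge 0$. It also shows $J(\theta)=\lim_{t\to\infty}Y_t(\theta)$ from the boundedness of $V$ and $\theta\ge\underline{u}$, and your handling of the case $J(\theta)=-\infty$ and of the ``if and only if'' is more explicit than the paper's. One small slip: as written, comparing $W$ with the decreasing line $W(t_1)-\epsilon(t-t_1)$ cannot contradict the supersolution property, since that property permits nonpositive slopes. The one-dimensional lemma instead needs a test function of positive slope touching from below at an interior point, e.g.\ $\epsilon t-\delta/(t-t_1)$ or the paper's $-C/(t-a)$; your Lipschitz alternative also works.
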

    \begin{proof}
    Note that $Y_{\sigma_n-}(\theta)\ge Y_{\sigma_n}(\theta)$ at any jump point $\sigma_n$ of path $\theta$ because $V_k(t)-\underset{j\neq k}{\max}\{V_j(t)-g_{k,j}\}\ge 0$ holds for all $k$ and $t>0$. Suppose that $\theta(t)=u_k$ for $t\in [\sigma_n,\sigma_{n+1})$. Consider $$\Phi(t)=V_k(t)e^{-u_k t}+\int_{\sigma_n}^t e^{-u_k s}[u_k R(\rho(s))-c(u_k)]\d s\quad t\in [\sigma_n,\sigma_{n+1}).$$
	Suppose that $Y_\cdot(\theta)$ is not non-increasing in $[\sigma_n,\sigma_{n+1})$, then there exist some $a,b\in [\sigma_n,\sigma_{n+1}), a<b$ such that $\Phi(a)<\Phi(b)$. Therefore, we can find a constant $C>0$ such that $\Phi(t)+\frac{C}{t-a}$ admits a local minimum point $t_0\in (a,b)$. Consider $\Psi(t)=\frac{C}{t_0-a}-\frac{C}{t-a}-\Phi(t_0)$ and define the test function $\phi\in C^1(a,b)$ by $$\phi(t)=e^{u_k t}\left(\Psi(t)-\int_{\sigma_n}^t e^{-u_k s}[u_k R(\rho(s))-c(u_k)]\d s\right),\quad t\in (a,b).$$
	Since $\Phi(t)-\Psi(t)$ takes local minimum at $t_0$ and $\Phi(t_0)-\Psi(t_0)=0$, we see that $V_k(t)-\phi(t)$ takes local minimum at $t_0$ as well. It follows from the viscosity supersolution property that
	$$-\phi'(t_0)-u_k(R(\rho(t_0))-\phi(t_0))+c(u_k) \ge 0,$$
	which is equivalent to $-e^{u_k t_0}\Psi'(t_0)\ge 0$, leading to a contradiction. This verifies that $Y_\cdot(\theta)$ is non-increasing. 
    
    To further show that $\theta$ is the best response given $\rho$ if and only if $Y_t(\theta)\equiv \underset{k}{\max}\{V_k(0)\}$, we only need to show that \begin{equation}\label{eq11}
	    J(\theta) = \lim_{t \to +\infty} Y_t(\theta),\quad \forall \theta\in D([0,+\infty),\mathbb{U}).
	\end{equation}
    In fact, direct calculation gives $$J(\theta)=\int_0^{+\infty} e^{-\int_0^s \theta(\tau) d\tau}[\theta(s) R(\rho(s))-c(\theta(s))]\d s-\sum_{0<\sigma_n < +\infty} e^{-\int_0^{\sigma_n}\theta(s)\d s} g_{\theta(\sigma_n-),\theta(\sigma_n)},$$
    and \eqref{eq11} readily follows from the boundedness of value function $V$ and the uniform positive lower bound $\theta(t)\ge \underline{u}>0,\;\forall t\in [0,+\infty),\;\forall \theta\in D([0,+\infty),\mathbb{U})$.
    \end{proof}
	
    \subsection{Mean-field Equilibrium}
    Consider the case that all other agents apart from the representative agent choose the identical effort process $\theta\in D([0,+\infty),\mathbb{U})$, then by the exact law of large numbers, the aggregate progress of the competition $\rho$ must be the unique solution to the following integral equation: \begin{equation}\label{proportion_flow}
		\rho(t)=\int_0^t \theta(s)(1-\rho(s))\d s,\quad t\ge 0.
	\end{equation} 
    
    This leads to the following definition of pure-strategy equilibrium:
    \begin{definition}
        A switching control $\theta\in D([0,+\infty),\mathbb{U})$ is called a pure-strategy equilibrium of the optimal switching mean field competition, if $\theta$ is an best response given $\rho$, where $\rho$ is uniquely determined by \eqref{proportion_flow}.
    \end{definition}
    It has been well-known that general MFGs of stopping or switching may not admit pure-strategy mean-field equilibrium. We therefore resort to the relaxed mean-field equilibrium for our rank-based MFG of optimal switching defined as below.
    \begin{definition}
        A relaxed mean-field equilibrium of the rank-based MFG of optimal switching is a probability measure $\mathbb{P^*}$ defined on $D([0,+\infty),\mathbb{U})$ such that $\mathbb{P^*}$ is supported on the set of best responses given $\rho$, and $\rho$ is uniquely determined by \eqref{proportion_flow} with $\theta$ being the average effort \begin{equation}\label{average_effort}
            \theta(t)=\int_{D([0,+\infty),\mathbb{U})}\omega(t)\mathbb{P}^*(\d\omega).
        \end{equation}
    \end{definition}

\begin{remark}
We adopt the ``relaxed" equilibrium to indicate the nature of the randomized switching policy, contrary to the regular control pure-strategy equilibrium in \citet{nutz2019}. This weaker definition of the relaxed equilibrium helps guarantee its existence under milder model assumptions when the pure-strategy equilibrium is absent. The ``relaxed" equilibrium is the same as the mixed-strategy equilibrium in the classical N-player game theory.  

\end{remark}

    To find a relaxed  equilibrium, direct application of topological degree fixed-point theorem (e.g. Kakutani-Fan-Glicksberg fixed-point theorem) encounters severe convexity issues due to the discreteness of the effort level set $\mathbb{U}$. To circumvent this obstacle, we first randomize the set of admissible switching controls and consider an entropy regularized problem. Next, we prove that the equilibrium of the entropy regularized problem actually converges to a relaxed  equilibrium in the original mean field competition of optimal switching. This procedure is rigorously accomplished in the next two sections.

	\section{Entropy Regularized Problem}\label{sec:regularization}
		In this section, by introducing the entropy regularization,
        we randomize the effort process of the representative agent, denoted by $I=(I(t))_{t\ge 0}$, to be a finite-state continuous-time Markov chain with state space $\mathbb{U}$, which is independent of $(\mathcal{E}^i)_{i\in I}$ and is determined by its generator (transition matrix) $\pi=\{\pi_{kl}(t),t\ge 0, k,l=1,2,\cdots, K\}$ and initial distribution $\Delta=(p_1,.\cdots,p_K)$. Define $$\tau_I^i=\inf\{t\ge 0:\int_0^t I(s)\d s=\mathcal{E}^i\}.$$
		We call $\tau_I^i$ the arrival time of agent $i$ with effort process $I$, which can be interpreted as the first jump time of a Cox process with intensity $I$.

		\subsection{Optimal control by a representative agent in the regularized problem}
		In this subsection, we again take $\rho\in\mathcal{D}$ as a given population aggregation, and first solve the best response control for the representative agent. Let us define the set of admissible control policies:
        \begin{definition}
            $(\pi,\Delta)$ is called an admissible control policy, denoted by $(\pi,\Delta)\in\mathcal{A}$, if 
            \begin{itemize}
                \item [(1)]$\Delta=(p_1,\cdots,p_K)\in\mathcal{P}(\mathbb{U})$, the set of probability measures defined on the effort level set $\mathbb{U}$.
                \item [(2)]$\forall k,l\in \{1,2,\cdots,K\}$, $\pi_{kl}(\cdot)\in C([0,+\infty))$.
                \item [(3)]$\pi_{kl}(t)\ge 0$ for $k\neq l$ and $\sum_{l=1}^K\pi_{kl}(t)=0$.
            \end{itemize}
        \end{definition}
        Clearly, each pair $(\pi,\Delta)\in\mathcal{A}$ generates a unique (up to distribution) effort process $I$. Henceforth, we write either $(\pi,\Delta)\in\mathcal{A}$ or $I\in\mathcal{A}$ when there is no confusion. Fix the temperature parameter $\eta>0$, the representative agent maximizes his expected payoff under entropy regularization given by
		\fontsize{8pt}{8pt}\selectfont 
        $$J^\eta(I)=\E\left[R(\rho(\tau_I^i))-\int_0^{\tau_I^i}\left(c(I(t))+\sum_{j\neq I(t)}\pi_{I(t)j}(t)g_{I(t),j}+\eta\left(\pi_{I(t)j}(t)\log(\pi_{I(t)j}(t))-\pi_{I(t)j}(t)\right)\right)\d t\right]-\eta \sum_{k=1}^K p_k\log p_k.$$
		\fontsize{11pt}{11pt}\selectfont
		Define the value function $V^\eta=\{(V_1^\eta(t),\cdots,V_k^\eta(t)),t\ge 0\}$ that
		\fontsize{8pt}{8pt}\selectfont
		\begin{align*}
			V_k^{\eta}(t)=\underset{I\in{\mathcal{A}}}{\sup}\E\left[\left.R(\rho(\tau_I^i))-\int_t^{\tau_I^i}\left(c(I(s))+\sum_{j\neq I(s)}\pi_{I(s)j}(t)g_{I(s),j}+\eta\left(\pi_{I(s)j}(t)\log(\pi_{I(s)j}(s))-\pi_{I(s)j}(t)\right)\right)\d s\right|I(t)=u_k,\tau_I^i>t\right].
		\end{align*}
		\fontsize{11pt}{11pt}\selectfont
		If $(\pi,\Delta)\in\mathcal{A}$ attains the maximum $\underset{(p_1,\cdots,p_K)\in\mathcal{P}(\mathbb{U})}{\max}\{\sum_{k=1}^K p_k V_k^\eta(0)-\eta \sum_{k=1}^K p_k\log p_k\}$,  we say that $(\pi,\Delta)$ is a best response for a given $\rho$. Using dynamic programming argument, we have the following system of HJB equations for $k=1,2,\cdots,K$:
		\fontsize{10pt}{10pt}\selectfont
		$$\frac{\d}{\d t}V_k^{\eta}(t)+u_k\left(R(\rho(t))-V_k^\eta(t)\right)-c(u_k)+\underset{\pi}{\sup}\left\{\sum_{j\neq k}\left[\pi_{kj}(V_j^\eta(t)-V_k^\eta(t)-g_{k,j})-\eta(\pi_{kj}\log(\pi_{kj})-\pi_{kj})\right]\right\}=0.$$
		\fontsize{11pt}{11pt}\selectfont
		Moreover, the best response control admits the Gibbs measure feedback form that
   \begin{equation}\label{optimal_control}
			\pi_{kj}^*(t)=\exp\left(\frac{V_j^\eta(t)-V_k^\eta(t)-g_{k,j}}{\eta}\right),\quad k\neq j\in\{1,2,\cdots,K\},
		\end{equation}
		and the HJB equation can be simplified as \begin{equation}\label{HJB}
			\frac{\d}{\d t}V_k^{\eta}(t)+u_k\left(R(\rho(t))-V_k^\eta(t)\right)-c(u_k)+\eta\sum_{j\neq k}\exp\left(\frac{V_j^\eta(t)-V_k^\eta(t)-g_{k,j}}{\eta}\right)=0,
		\end{equation}
for $k=1,2,\ldots,K$. By the boundedness of reward scheme $R$, we  know that the value function is bounded, which inspires us to derive the following result.
        \begin{theorem}\label{ex_un}
			There exists a unique bounded classical solution to HJB equation \eqref{HJB}.
		\end{theorem}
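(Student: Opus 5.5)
We look for a bounded solution of the system \eqref{HJB} on $[0,+\infty)$, which is a nonautonomous ODE system with no terminal condition. The plan rests on two facts. First, the exponential term is monotone: it is increasing in $V_j$ for $j\neq k$ and decreasing in $V_k$, so the system is quasi-monotone. Second, the dissipative coefficient $-u_kV_k$ satisfies $u_k\ge\underline{u}>0$. We build the solution as a limit of finite-horizon problems and prove uniqueness by a maximum-principle comparison.

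\textbf{Step 1 (finite horizon and a priori bounds).} For $T>0$, solve \eqref{HJB} backward on $[0,T]$ with terminal data $V^{T}_k(T)=0$. The right-hand side is locally Lipschitz in $V$ and continuous in $t$, since $\rho$ is continuous. Local existence therefore holds, and global existence on $[0,T]$ follows from a uniform bound, which we obtain from constant sub- and supersolutions. Take $\overline{M}$ constant with $\underline{u}\,\overline{M}\ge \bar u\,\sup R+\eta(K-1)$; this handles the exponential term, since $V_j-V_k-g_{k,j}\le0$ when all components are equal. Then $(\overline{M},\dots,\overline{M})$ is a supersolution. Similarly $(-\underline{M},\dots,-\underline{M})$, with $\underline{M}$ large relative to $\max_k c(u_k)/\underline{u}$, is a subsolution. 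A comparison principle for quasi-monotone systems, the same argument as Step 3, gives $-\underline{M}\le V^T_k\le\overline{M}$ uniformly in $T$. Here one should use $u_kV_k\ge\underline{u}V_k$ when $V_k>0$ and $u_kV_k\le\underline{u}V_k$ when $V_k<0$, or simply take $\overline{M}\ge \max_k(u_k\sup R)/u_k$ componentwise; the constant choice works because $R\ge0$ and $c\ge0$.

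\textbf{Step 2 (passage to $T\to\infty$).} The bounds give a uniform bound on $\frac{\d}{\d t}V^T$ from the equation, so the family is equicontinuous. By Arzel\`a--Ascoli and a diagonal argument, a subsequence converges locally uniformly on $[0,+\infty)$. Passing to the limit in the integral form of \eqref{HJB} on each compact interval yields a bounded $C^1$ solution. Alternatively, the contraction estimate of Step 3 shows $V^T$ is Cauchy, since $|V^T-V^{T'}|(t)\le Ce^{-\underline{u}(T-t)}$, which gives full convergence without extracting subsequences.

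\textbf{Step 3 (uniqueness; the main point).} Let $V,W$ be two bounded classical solutions and set $D_k=V_k-W_k$. Let $m(t)=\max_k D_k(t)$ and, at a given $t$, pick $k$ attaining the maximum. Since $D_j\le D_k$, we have $V_j-V_k\le W_j-W_k$, and monotonicity of $x\mapsto e^{x/\eta}$ shows that the exponential sum for $V$ is at most that for $W$ in row $k$. Subtracting the two equations gives $D_k'\ge u_kD_k$ at such points. Hence, wherever $m(t)>0$, the upper Dini derivative satisfies $D^+m\ge \underline{u}\,m$ in the viscosity or Dini sense. The max of finitely many $C^1$ functions is handled by the standard lemma that its right derivative equals the max of derivatives over active indices. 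Then $m(t)\ge m(t_0)e^{\underline{u}(t-t_0)}$, which contradicts boundedness unless $m\le0$. The symmetric argument gives $\min_kD_k\ge0$, so $V=W$.

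I expect the main technical care to lie in this step: making the Dini-derivative argument for the maximum rigorous, and verifying the correct direction of the exponential coupling. The same comparison, applied on $[t,T]$ with the boundary term $e^{-\underline{u}(T-t)}$, gives both the a priori bounds of Step 1 and the Cauchy estimate of Step 2.
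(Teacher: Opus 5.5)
Your proposal is correct and takes essentially the same route as the paper. Your finite-horizon problems with $V^T(T)=0$ are the paper's time-reversed solutions with $V^{(n)}(-n)=0$, bounded uniformly by constant super/subsolutions through the quasi-monotone (Kamke) comparison and passed to the limit by Arzel\`a--Ascoli with a diagonal argument in integral form, and your uniqueness step is the same Dini-derivative Gronwall argument on $\max_k(V_k-W_k)$. Two small refinements over the paper's write-up: your optional Cauchy estimate $|V^T-V^{T'}|(t)\le Ce^{-\underline{u}(T-t)}$ avoids extracting a subsequence, and you state $D^+m\ge\underline{u}\,m$ only where $m>0$, which the paper leaves implicit.
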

		\begin{proof}

        \textit{(i) Existence}: Consider the time-reversed equation $\frac{\d}{\d t}V^{\eta}(t)=F(t,V^\eta(t))$, where $$F_k(t,V^\eta(t))=u_k\left(R(\rho(-t))-V_k^\eta(t)\right)-c(u_k)+\eta\sum_{j\neq k}\exp\left(\frac{V_j^\eta(t)-V_k^\eta(t)-g_{k,j}}{\eta}\right),\quad k=1,2,\cdots, K.$$
		We need to prove that $\frac{\d}{\d t}V^{\eta}(t)=F(t,V^\eta(t))$ admits a bounded solution $V\in C^1((-\infty,0])$. It is easily verified that $F$ satisfies the Kamke's condition (also called quasi-monotonicity) of monotone dynamical systems, so that the following comparison theorem holds (see \citet[Section 10.XII]{walter1998} for details):
        % page 112
        \begin{itemize}
            \item Let $\overline{V}^\eta$, $V^\eta$, and $\underline{V}^\eta$ be a supersolution, a solution, and a subsolution, respectively, to the monotone dynamical system $\frac{\d}{\d t}V^{\eta}(t)=F(t,V^\eta(t))$, that is,
            $$\frac{\d}{\d t}\overline{V}^\eta(t)>F(t,\overline{V}^\eta(t)),\quad \frac{\d}{\d t}V^{\eta}(t)=F(t,V^\eta(t)),\quad \frac{\d}{\d t}\underline{V}^\eta(t)<F(t,\underline{V}^\eta(t)).$$
            If they satisfy the initial condition $\overline{V}^\eta(t_0) > V^\eta(t_0) > \underline{V}^\eta(t_0)$ componentwise, then$$\overline{V}^\eta(t) > V^\eta(t) > \underline{V}^\eta(t) \quad \text{componentwise}$$for every $t \in [t_0, t_1)$, where $t_1 > t_0$ is the right endpoint of the intersection of their maximal intervals of existence.
        \end{itemize}
            
			\noindent We can find $M>0$ and $m<0$ such that $\overline{V}^\eta\triangleq M\1_K$ and $\underline{V}^\eta\triangleq m\1_K$ forms supersolution and subsolution to $\frac{\d}{\d t}V^{\eta}(t)=F(t,V^\eta(t))$ for $t\in(-\infty,0]$ respectively. Let $V^{(n)}$ denote the solution of $\frac{\d}{\d t}V^{\eta}(t)=F(t,V^\eta(t))$ satisfying $V^{(n)}(-n)=0.$ By Picard-Lindelof theorem (see \citet[Theorem 1.3.1]{hale2009}) and the above comparison principle, we know that $V^{(n)}$ is well-defined and $V^{(n)}_k(t)\in [m,M]$ holds for $k=1,2,\cdots,K$ and $t\in [-n,0]$. It is clear that $\{V^{(n)}\}_{n\ge 1}$ are uniformly bounded and equi-continuous. Using a diagonal argument and invoke the Ascoli-Arzela's lemma, we can extract a subsequence $\{V^{(n_k)}\}$ satisfying $V^{(n_k)}\rightarrow V$ uniformly on compacts, where $V\in C((-\infty,0])$. It is clear that $-m\le V\le M$, and it remains to show that $V$ is a classical solution to $\frac{\d}{\d t}V^{\eta}(t)=F(t,V^\eta(t))$ for $t\in(-\infty,0]$. In fact, for each $t<0$, we have $t\in (-n_k,0)$ for sufficiently large $k$. Therefore, we have $$V^{(n_k)}(t)=V^{(n_k)}(0)-\int_t^0 F(s,V^{n_k}(s))\d s.$$
			Using the local Lipschitz continuity of $F$ and the uniform convergence $V^{(n_k)}\rightarrow V$ on $[-t,0]$, we have $V(t)=V(0)-\int_t^0 F(s,V(s))\d s$ by passing to limits. This readily shows that $V\in C^1((-\infty,0])$ is a bounded classical solution.

            \noindent In fact, the constants $M,m$ in the above argument can be chosen consistently regardless of $\rho\in\mathcal{D}$, which combine with Theorem \ref{verification2} shows that given any population aggregation $\rho\in\mathcal{D}$, the value function of the representative agent is uniformly bounded.\\
            
			\noindent \textit{(ii) Uniqueness}: Let $V^\eta$ and $W^\eta$ be two different bounded classical solutions. Define a continuous function $D:[0,+\infty)\rightarrow\mathbb{R}$ by $$D(t)=\underset{k=1,2,\cdots,K}{\max}\{V^\eta_k(t)-W^\eta_k(t)\}.$$
			Assume that the maximum is attained by $k^*$ in the definition of $D(t)$, that is $$V_{j}^\eta(t)-V_{k^*}^\eta(t)\le W_{j}^\eta(t)-W_{k^*}^\eta(t),\quad \forall j\neq k^*.$$
			Therefore, we have $$\frac{\d}{\d t}(V_{k^*}^\eta-W_{k^*}^\eta)(t)-u_{k^*}(V_{k^*}^\eta(t)-W_{k^*}^\eta(t))\ge0.$$
			Consider the upper right Dini-derivative of $D$, we have \begin{align*}
				D^{+}D(t)&\triangleq \underset{h\rightarrow 0}{\limsup}\frac{D(t+h)-D(t)}{h}\\
				&\ge \underset{h\rightarrow 0}{\limsup}\frac{V_{k^*}^\eta(t+h)-W_{k^*}^\eta(t+h)-(V_{k^*}^\eta(t)-W_{k^*}^\eta(t))}{h}\\
				& \ge u_{k^*}(V_{k^*}^\eta(t)-W_{k^*}^\eta(t))\\
				& \ge \underline{u}D(t).
			\end{align*}
			By a Dini-derivative version of Gronwall's inequality (\citet[Theorem 1.4.1]{lakshmikantham1969}), we conclude that $$D(t)\ge D(t_0)e^{\underline{u}(t-t_0)},\quad \forall t>t_0\ge0.$$ 
            If $D(t_0)>0$ for some $t_0\ge 0$, the above inequality implies that $V^\eta$ and $W^\eta$ cannot be both bounded because $\underline{u}>0$, and we obtain the uniqueness result as desired. If $D(t)\le 0$ for all $t\ge 0$, since $V^\eta$ and $W^\eta$ are two different solutions, we will consider $$\tilde{D}(t)=\underset{k=1,2,\cdots,K}{\max}\{W^\eta_k(t)-V^\eta_k(t)\},$$ and $\tilde{D}(t_0)>0$ for some $t_0>0$. Repeat the argument then gives the desired result.
		\end{proof}

        Similarly, a verification theorem can be carried out for the entropy regularized problem.

        \begin{theorem}\label{verification2}
			The value function $V^\eta$ is the unique bounded solution to HJB equation \eqref{HJB}. Let $\pi$ be given by \eqref{optimal_control} and $\Delta$ be the softmax distribution given by \begin{equation}\label{softmax}
			    p_k=\frac{\exp\{V_k^\eta(0)/\eta\}}{\sum_{j=1}^K \exp\{V_j^\eta(0)/\eta\}},
			\end{equation} then $(\pi,\Delta)$ is the unique best response given $\rho$.
		\end{theorem}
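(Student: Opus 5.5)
Let $W$ denote the unique bounded classical solution of \eqref{HJB} from Theorem \ref{ex_un}. The plan is to show $V^\eta = W$ by a verification argument that compares $W$ with the payoff of an arbitrary admissible policy.

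\textbf{Reduction to a deterministic functional.} Fix $(\pi,\Delta)\in\mathcal{A}$ with associated chain $I$. Since $I$ is independent of $\mathcal{E}^i$, conditioning on $I$ gives $\P(\tau_I^i>s\mid I)=e^{-\int_t^s I(r)\d r}$ on $\{\tau^i_I>t\}$. The conditional payoff therefore equals
\[
\E\Big[\int_t^{\infty} e^{-\int_t^s I(r)\d r}\big(I(s)R(\rho(s))-\ell(s,I(s))\big)\d s\,\Big|\,I(t)=u_k\Big],
\]
where $\ell(s,u_k)=c(u_k)+\sum_{j\neq k}\big[\pi_{kj}(s)g_{k,j}+\eta(\pi_{kj}(s)\log\pi_{kj}(s)-\pi_{kj}(s))\big]$. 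The entropic term is bounded below by $-\eta$ per rate, so the integrand is bounded above and the expression is well defined, possibly $-\infty$.

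\textbf{Discounted evolution process.} Define
\[
Z_s=W_{I(s)}(s)e^{-\int_t^s I(r)\d r}+\int_t^s e^{-\int_t^r I(q)\d q}\big(I(r)R(\rho(r))-\ell(r,I(r))\big)\d r.
\]
By Dynkin's formula for the time-inhomogeneous chain with continuous generator $\pi$, $Z$ has drift
\[
e^{-\int_t^s I}\Big[W'_k+u_k(R-W_k)-c(u_k)+\sum_{j\neq k}\big(\pi_{kj}(W_j-W_k-g_{k,j})-\eta(\pi_{kj}\log\pi_{kj}-\pi_{kj})\big)\Big]
\]
on $\{I(s)=u_k\}$. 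The bracket is $\le0$ by \eqref{HJB}, because the entropic supremum in the HJB equation equals $\eta\sum_j\exp((W_j-W_k-g_{k,j})/\eta)$. Equality holds iff $\pi_{kj}$ is the Gibbs rate \eqref{optimal_control}, by strict concavity of $x\mapsto x a-\eta(x\log x-x)$.

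\textbf{Localization and passage to $T\to\infty$.} This is the main technical point. The rates $\pi$ are continuous but need not be bounded, so I localize with the jump times of $I$ (which are finite on compacts a.s.) and use $\ell\ge -C$ together with Fatou. This gives $W_k(t)\ge \E[Z_T]$. Then I let $T\to\infty$, using the boundedness of $W$ and the discount $e^{-\underline{u}(T-t)}$, which comes from $0\notin\mathbb{U}$. The conclusion is $W_k(t)\ge$ payoff of any policy, hence $W\ge V^\eta$.

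For the Gibbs control $\pi^*$ from \eqref{optimal_control}, the rates are bounded since $W$ is bounded, and they are continuous since $W\in C^1$. So $(\pi^*,\cdot)$ is admissible, all inequalities become equalities, and $V^\eta=W$. Uniqueness of the bounded solution then follows from Theorem \ref{ex_un}.

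\textbf{Optimality and uniqueness of the best response.}
\begin{itemize}
\item \emph{Initial distribution.} The total payoff is $\sum_k p_k(\text{payoff}_k)-\eta\sum_k p_k\log p_k\le \sum_k p_kV^\eta_k(0)-\eta\sum_k p_k\log p_k$. The right side is strictly concave in $\Delta$ and uniquely maximized by the softmax \eqref{softmax}, by the Gibbs variational principle.
\item \emph{Generator.} If a policy $(\pi,\Delta)$ attains the maximum, then $\Delta$ must be the softmax, which has all $p_k>0$. Hence each state's conditional payoff from time $0$ equals $V_k^\eta(0)$, and the nonpositive drift must vanish $\d s\otimes\d\P$-a.e. along the chain.
\item \emph{Identification of $\pi$.} The chain started with full support visits every state with positive probability on every time interval. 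Together with the strict concavity above, this forces $\pi_{kj}(s)=\pi^*_{kj}(s)$ for a.e. $s$, and then for all $s$ by continuity.
\end{itemize}
This last step, showing each state is occupied with positive probability at each time under a possibly unbounded generator, is the delicate point. I would handle it by noting that the holding probability $\exp(-\int\sum_j\pi_{kj})$ is positive on bounded intervals, since continuous rates are locally bounded.
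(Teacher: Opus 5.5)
Your proposal is correct and follows essentially the same verification route as the paper: Dynkin's formula for the discounted process $W_{I(s)}(s)e^{-\int_t^s I}$, the Fenchel (Gibbs) inequality giving a nonpositive drift with equality exactly at \eqref{optimal_control}, the limit $T\to\infty$ via boundedness of $W$ and $\underline{u}>0$, and the softmax choice of $\Delta$. Your extra care fills in steps the paper's ``equality iff no duality gap'' sentence leaves implicit: the monotone/dominated convergence treatment of possibly $-\infty$ payoffs, and the lower bound $\P(I(s)=u_k)\ge p_k e^{-\int_0^s\sum_{j\neq k}\pi_{kj}}>0$ that identifies $\pi$ for a.e.\ $s$. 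The jump-time localization is unnecessary, since continuous rates are already bounded on each $[t,T]$.
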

		\begin{proof}
			Let $V=(V_1,\cdots,V_K)$ be the unique bounded classical solution to the HJB equation \eqref{HJB}. Given any admissible policy $I\in\mathcal{A}$, we have the conditional probability expression $\P(\tau_I^i>s|\tau_I^i>t) = \exp\left(-\int_t^s I(\tau) d\tau\right)$. Then, let us rewrite the value function as:
				$$V^\eta_k(t) = \underset{I\in\mathcal{A}}{\sup}\;\E_{t,k}\left[\int_t^\infty e^{-\int_t^s I(\tau) \d\tau}\left( I(s)R(\rho(s))-c(I(s))-G(s, \pi)\right) \d s \right],$$
			where the expectation is conditional on $I(t)=u_k$, and
			$$G(s,\pi)=\sum_{j\neq I(s)}\pi_{I(s)j}(s)g_{I(s),j}+\eta\sum_{j \neq I(s)} \left(\pi_{I(s)j}(s) \log(\pi_{I(s)j}(s))-\pi_{I(s)j}(s)\right).$$
			Consider the stochastic process $Y(s)=V_{I(s)}(s) \exp\left(-\int_t^s I(\tau) \d\tau\right)$, where $V_{I(s)}(s)=V_k(s)$ if and only if $I(s)=u_k$. Applying Dynkin's formula over a finite horizon $[t,T]$, we obtain:
			\fontsize{10pt}{10pt}\selectfont
			$$\E_{t.k}[Y(T)]-Y(t)=\E_{t,k}\left[\int_t^T e^{-\int_t^s I(\tau) d\tau} \left(V'_{I(s)}(s)-I(s)V_{I(s)}(s)+\sum_{j\neq I(s)} \pi_{I(s)j}(s)(V_j(s)-V_{I(s)}(s)) \right)\d s\right].$$
			\fontsize{11pt}{11pt}\selectfont 
            % Take $f(t,x,i)=e^{-x}V_i(t)$ and use generalized Ito's formula.
			Using the boundedness of $V$, we see that $\E_{t,k}[Y(T)]\rightarrow 0$ as $T\rightarrow+\infty$, and consequently
			\begin{equation}\label{eq5}
				V_k(t)=\E_{t,k}\left[ \int_t^\infty e^{-\int_t^s I(\tau)\d\tau}\left(-V'_{I(s)}(s)+I(s)V_{I(s)}(s)-\sum_{j\neq I(s)}\pi_{I(s)j}(s)(V_j(s)-V_{I(s)}(s)) \right)\d s \right].
			\end{equation}
			Note that $f(x)=\eta(x\log x-x)$ and $f^*(y)=\eta \exp(y/\eta)$ is a pair of conjugate convex functions. Therefore, for any $y\in\mathbb{R}$ and $x>0$, we have $\eta \exp(y/\eta)\ge xy-\eta(x\log x-x)$ by Fenchel's inequality, with equality holds if and only if $x=\exp(y/\eta)$. Take $y=V_j(s)-V_{I(s)}(s)-g_{I(s),j}$ and $x=\pi_{I(s)j}(s)$. In view that $$-V'_k(s)+u_kV_k(s)=u_k R(\rho(s))-c(u_k)+\eta\sum_{j\neq k}\exp\left(\frac{V_j(s)-V_k(s)-g_{k,j}}{\eta}\right),$$
			we deduce that for any admissible policy $I\in\mathcal{A}$,
			$$-V'_{I(s)}(s)+I(s) V_{I(s)}(s)-\sum_{j \neq I(s)} \pi_{I(s)j}(s) (V_j(s)-V_{I(s)}(s))\ge I(s) R(\rho(s))-c(I(s))-G(s, \pi).$$
			Substituting this back into \eqref{eq5}, we obtain
			\begin{equation}\label{eq12}
			    V_k(t)\ge \E_{t,k}\left[\int_t^\infty e^{-\int_t^s I(\tau) \d\tau}\left( I(s)R(\rho(s))-c(I(s))-G(s, \pi)\right) \d s \right]
			\end{equation}
			for all $I\in\mathcal{A}$. This shows that $V_k(t)$ is an upper bound for $V^\eta_k(t)$. Furthermore, \eqref{eq12} is an equality if and only if no duality gap is realized in the application of Fenchel's inequality, which is equivalent to that the representative agent chooses the feedback policy $\pi^*$ given by
			$$\pi_{kj}^*(s)=\exp\left(\frac{V_j(s)-V_k(s)-g_{k,j}}{\eta}\right).$$
			This fact establishes $V$ as the true value function and $\pi^*$ as the unique optimal control.
			
			\noindent Finally, at $t=0$, the representative agent chooses a distribution $\Delta \in \mathcal{P}(\mathbb{U})$ to maximize the initial payoff $ \sum_{k=1}^K p_k V_k^\eta(0)-\eta \sum_{k=1}^K p_k \log p_k$. A simple application of the Lagrange multiplier method shows that this term is maximized by the softmax distribution given by \eqref{softmax}.
			\end{proof}
			
		\subsection{Existence of regularized mean-field equilibrium}
        Suppose that all other agents except the representative agent $i$ chooses an identical randomized strategy $\pi$ and initial distribution $\Delta=(p_1,\cdots,p_K)$. Let $m_k(t)$ denote the proportion of agents that stays at state $u_k$ and has not arrived at time $t$, that is $$m_k(t)=\int_{I\times\Omega}\1_{\{\tau_I^i\ge t\}}\1_{\{I(t)=u_k\}}\d (\mu\otimes\mathbb{P}).$$
		Let $m(t)=(m_1(t),\cdots,m_K(t))$, then the the aggregate proportion of agent that has arrived before time $t$ is given by $\rho(t)=1-\sum_{k=1}^Km_k(t)$. 
		By the exact law of large numbers, we have the following Kolmogorov forward equation \begin{equation}\label{eq13}
		    \frac{\d}{\d t}m_k(t)=-u_km_k(t)+\sum_{j=1}^K m_j(t)\pi_{jk}(t),\quad m_k(0)=p_k,\qquad k=1,2,\cdots,K.
		\end{equation}
		The aggregate progress of the mean field competition evolves as \begin{equation}\label{eq14}
		    \rho'(t)=\sum_{k=1}^K u_km_k(t),\quad \rho(0)=0.
		\end{equation}
        Consider the following mapping:
        \fontsize{9pt}{9pt}\selectfont
	\begin{align}\label{mapping}
		\begin{matrix}
			\mathcal{D} & \xrightarrow{\text{HJB}} & C([0,+\infty))^K & \xrightarrow {\text{best response}} & C([0,+\infty))^{K\times K}\times\mathcal{P}(\mathbb{U}) & \xrightarrow{\text{Kolmogorov}} & C([0,+\infty))^{K} & \xrightarrow{\eqref{eq14}} & \mathcal{D}\\
			\rho & \mapsto & V^\eta & \mapsto &(\pi,\Delta) & \mapsto& m &\mapsto &\Psi^\eta(\rho)
		\end{matrix}
	\end{align}
    \fontsize{11pt}{11pt}\selectfont
    where $V^\eta$ is the unique bounded classical solution to HJB equation \eqref{HJB}, $(\pi,\Delta)$ is the best response given $\rho$, and $\Psi^\eta(\rho)$ is generated by $(\pi,\Delta)$ through \eqref{eq13} and \eqref{eq14}. It is natural to define the concept of equilibrium as follows:
    \begin{definition}
        A regularized mean-field equilibrium of the rank-based MFG under entropy regularization is a fixed point $\rho\in\mathcal{D}$ such that $\rho=\Psi^{\eta}(\rho)$ for a given $\eta>0$.
    \end{definition}

    We aim to utilize Schauder's fixed-point theorem (\citet[Corrolary 17.56]{aliprantis2006}) to prove the existence of such equilibrium. To achieve this, we need to investigate the topology on $\mathcal{D}$. First, we equip $C([0,+\infty))$ with the metric $$d_\alpha(\rho_1,\rho_2)=\underset{t\ge 0}{\sup}\{e^{-\alpha t}|\rho_1(t)-\rho_2(t)|\},\quad \alpha>0.$$
		We have $\rho_n\xrightarrow{d_\alpha}\rho$ if and only if $\rho_n\rightarrow \rho$ uniformly on compact sets. 
		\begin{lemma}\label{compactness}
			$\mathcal{D}$ is a convex, compact subset of $(C([0,+\infty)),d_\alpha)$.
		\end{lemma}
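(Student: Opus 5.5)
Convexity is immediate from the definition. If $\rho_1,\rho_2\in\mathcal{D}$ and $\lambda\in[0,1]$, then $\rho=\lambda\rho_1+(1-\lambda)\rho_2$ is continuous with $\rho(0)=0$ and $\rho(t)\le 1$. For $t\ge s$, each of the increment bounds $0\le\rho_i(t)-\rho_i(s)\le\bar u(t-s)$ is a linear inequality in $\rho_i$, so both are preserved under convex combination. Hence $\rho\in\mathcal{D}$.

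For compactness I will use the fact, noted just before the lemma, that $d_\alpha$-convergence is equivalent to uniform convergence on compacts. The key steps are as follows.

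\textbf{Step 1: $(C([0,+\infty)),d_\alpha)$ is a metric space in which $\mathcal{D}$ is bounded.} On $\mathcal{D}$ we have $0\le\rho\le 1$, because $\rho(t)\ge\rho(0)=0$. So $d_\alpha$ is finite on $\mathcal{D}$, and it suffices to prove sequential compactness.

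\textbf{Step 2: a uniformly convergent subsequence on compacts.} Let $(\rho_n)\subset\mathcal{D}$. The family is uniformly bounded by $1$ and uniformly Lipschitz with constant $\bar u$, hence equicontinuous. By Arzel\`a--Ascoli on each $[0,N]$, combined with a diagonal argument over $N\in\mathbb{N}$, I extract a subsequence converging uniformly on compacts to some continuous $\rho$.

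\textbf{Step 3: the limit lies in $\mathcal{D}$.} The defining constraints of $\mathcal{D}$ are pointwise closed conditions, so they pass to the limit and $\rho\in\mathcal{D}$, i.e.\ $\mathcal{D}$ is closed.

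\textbf{Step 4: upgrade to $d_\alpha$-convergence.} Given $\varepsilon>0$, choose $T$ with $e^{-\alpha T}\le\varepsilon/2$. Since all functions involved take values in $[0,1]$, for $t\ge T$ we have $e^{-\alpha t}|\rho_n(t)-\rho(t)|\le\varepsilon/2$. On $[0,T]$, uniform convergence makes the weighted difference less than $\varepsilon/2$ for $n$ large. Therefore $d_\alpha(\rho_n,\rho)\to0$, and $\mathcal{D}$ is sequentially compact, hence compact.

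No step is a real obstacle. The only point needing care is the uniform tail estimate in Step 4, and the exponential weight together with the bound $0\le\rho\le 1$ handles it directly.
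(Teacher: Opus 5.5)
Your proposal is correct and follows the paper's argument: convexity is immediate, and sequential compactness comes from Arzel\`a--Ascoli on each $[0,N]$ with a diagonal extraction, followed by checking that the limit stays in $\mathcal{D}$. Your Step 4 also makes explicit the passage from locally uniform convergence to $d_\alpha$-convergence, which the paper only asserts and which does rely on the uniform bound $0\le\rho\le 1$ on $\mathcal{D}$.
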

		\begin{proof}
			Convexity is obvious. To prove the compactness of $\mathcal{D}$, we only need to show that $\mathcal{D}$ is sequentially compact. Take a sequence $\{\rho_n\}_{n\ge 1}\subset \mathcal{D}$. For each $m\ge 1$, $\{\rho_n\}_{n\ge 1}$ can be regarded as a subset of $C([0,m])$, which is uniformly bounded and equi-continuous. Using a diagonal argument and invoke the Ascoli-Arzela's lemma, we can extract a subsequence $\{\rho_{n_k}\}_{k\ge 1}$ such that $\rho_{n_k}$ converges to some $\rho\in C([0,+\infty))$ uniformly on compacts. It is then easy to verify that $\rho\in\mathcal{D}$, using the uniform convergence property.
		\end{proof}
    The following theorem shows that the solution mapping $\begin{matrix}
			\mathcal{D} & \rightarrow & C([0,+\infty))^K \\
			\rho & \mapsto & V^\eta
		\end{matrix}$ is continuous:
	
	\begin{theorem}\label{stability}
		Let $\rho_1,\rho_2\in \mathcal{D}$ and $V^{(1)},V^{(2)}$ be the unique bounded solution to the HJB equation \eqref{HJB} with $\rho$ replaced by $\rho_1$ and $\rho_2$ respectively. Then for $0<\alpha<\underline{u}$, there exists constant $C$ such that $$d_\alpha(V^{(1)}_k,V_k^{(2)})\le Cd_\alpha(\rho_1,\rho_2), \quad \forall k=1,2,\cdots,K.$$
	\end{theorem}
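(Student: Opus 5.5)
The plan is to reuse the max-difference / Dini-derivative argument from the uniqueness part of Theorem \ref{ex_un}, now with a source term. Set $W_k=V^{(1)}_k-V^{(2)}_k$ and $D(t)=\max_k W_k(t)$, and let $L$ be the Lipschitz constant of $R$. Both solutions are bounded uniformly in $\rho$ (the constants $m,M$ in Theorem \ref{ex_un} do not depend on $\rho$), so $D$ is bounded.

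First I derive the one-sided differential inequality. At a time $t$ and an index $k^*$ attaining the maximum in $D(t)$, we have $V^{(1)}_j-V^{(1)}_{k^*}\le V^{(2)}_j-V^{(2)}_{k^*}$ for every $j$. Since $\exp$ is monotone, the entropy terms satisfy
\[
\eta\sum_{j\ne k^*}\exp\bigl((V^{(1)}_j-V^{(1)}_{k^*}-g)/\eta\bigr)\le \eta\sum_{j\ne k^*}\exp\bigl((V^{(2)}_j-V^{(2)}_{k^*}-g)/\eta\bigr).
\]
Subtracting the two copies of \eqref{HJB} for index $k^*$ then gives
\[
W_{k^*}'(t)\ge u_{k^*}W_{k^*}(t)-u_{k^*}\bigl(R(\rho_1(t))-R(\rho_2(t))\bigr)\ge u_{k^*}D(t)-\bar u L|\rho_1(t)-\rho_2(t)|.
\]
Writing $\delta=d_\alpha(\rho_1,\rho_2)$, the last term is at least $-\bar uL\delta e^{\alpha t}$.

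Next I propagate this forward. Suppose $D(t_0)>\kappa\delta e^{\alpha t_0}$, where $\kappa=\bar uL/(\underline u-\alpha)$. Consider the comparison function $\psi(t)=\kappa\delta e^{\alpha t}$, which satisfies $\psi'=\alpha\psi=\underline u\psi-\bar uL\delta e^{\alpha t}$. While $D>\psi>0$, we have $u_{k^*}D\ge\underline u D$, so
\[
D^+(D-\psi)\ge \underline u D-\bar uL\delta e^{\alpha t}-\underline u\psi+\bar uL\delta e^{\alpha t}=\underline u(D-\psi).
\]
By the Dini Gronwall lemma \citep[Theorem 1.4.1]{lakshmikantham1969}, this gives $D(t)-\psi(t)\ge (D(t_0)-\psi(t_0))e^{\underline u(t-t_0)}$. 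The difference therefore grows like $e^{\underline u t}$ while $\psi$ grows only like $e^{\alpha t}$ with $\alpha<\underline u$, so $D$ is unbounded, a contradiction. Hence $D(t)\le\kappa\delta e^{\alpha t}$ for all $t$.

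The symmetric argument applied to $\max_k(V^{(2)}_k-V^{(1)}_k)$ gives the matching bound from the other side. Together they yield $|W_k(t)|e^{-\alpha t}\le \kappa\,\delta$, so the claim holds with $C=\bar uL/(\underline u-\alpha)$. The restriction $\alpha<\underline u$ is exactly what makes the exponential-rate comparison work.

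The main obstacle is making the Dini-derivative step rigorous. The index $k^*$ may change with $t$, and the bound has to hold for every maximizing index. It does: the inequality above holds for any maximizer, and the upper right Dini derivative of a maximum of finitely many $C^1$ functions is at least the derivative of any active component. The positivity of $D$ on the relevant interval, needed to replace $u_{k^*}D$ by $\underline uD$, is guaranteed because we only work on the set where $D>\psi>0$, and Gronwall keeps us in that set once we start there.
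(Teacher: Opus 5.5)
Your proof is correct, but it takes a different route from the paper's.

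\textbf{The paper's route.} The paper linearizes $W_k=V^{(1)}_k-V^{(2)}_k$ with the integral mean value theorem. This gives the cooperative linear system $-W_k'+(u_k+\Lambda_k)W_k=u_k\Delta R+\sum_{j\neq k}\Lambda_{j,k}W_j$ with $\Lambda_{j,k}\ge 0$. It then writes the integrating-factor (Duhamel) representation on $[t_0,\infty)$, where the boundary term at infinity vanishes because $W$ is bounded, and passes to the weighted norm $\|W\|_\alpha=\sup_{k,t}e^{-\alpha t}|W_k(t)|$. Two facts then force $u_k\|\Delta R\|_\alpha-(u_k-\alpha)\|W\|_\alpha\ge 0$ for some $k$:
\begin{itemize}
\item the identity $\int_{t_0}^\infty \Gamma_k e^{-\int_{t_0}^s\Gamma_k}\,\d s=1$;
\item the lower bound $\int_{t_0}^\infty e^{-\int_{t_0}^s\Gamma_k}\,\d s\ge 1/(\overline{u}+M-\alpha)$, which needs a uniform upper bound $M$ on the coefficients $\Lambda_k$.
\end{itemize}

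\textbf{Your route.} You never linearize. You use only the sign of the entropy coupling at a maximizing index (the quasi-monotonicity of the Gibbs term), and then compare $D=\max_k W_k$ against the explicit barrier $\psi=\kappa\delta e^{\alpha t}$ through the Dini-derivative Gronwall lemma and a continuation argument.

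\textbf{What each buys.} Your approach needs no bound on $\Lambda_k$ and reduces exactly to the uniqueness argument of Theorem \ref{ex_un} when $\delta=0$. It also yields the same constant $\overline{u}L/(\underline{u}-\alpha)$. The paper's approach gives an integral representation of $W$ and estimates $\|W\|_\alpha$ directly. It avoids the case analysis on the set $\{D>\psi\}$ and the continuation step, at the cost of the linearization and the bound on $\Lambda_k$.

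\textbf{A small clarification.} Once $D>\psi$ persists for all $t\ge t_0$, unboundedness is immediate from $D(t)\ge \psi(t)+(D(t_0)-\psi(t_0))e^{\underline{u}(t-t_0)}$ with $\psi\ge 0$. So no comparison of the growth rates $e^{\underline{u}t}$ versus $e^{\alpha t}$ is needed. The hypothesis $\alpha<\underline{u}$ enters in a different place: it makes $\kappa=\overline{u}L/(\underline{u}-\alpha)$ positive and finite. Then $\psi$ is a nonnegative barrier solving $\psi'=\underline{u}\psi-\overline{u}L\delta e^{\alpha t}$, and $D>\psi$ implies $D>0$, which is what lets you replace $u_{k^*}D$ by $\underline{u}D$.
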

	\begin{proof}
		Consider $$W_k(t)=V^{(1)}_k(t)-V^{(2)}_k(t),\quad \Delta R(t)=R(\rho_1(t))-R(\rho_2(t)).$$
		Using the integral form of the mean value theorem, 
        % $f(b)-f(a)=(b-a)\int_0^1 f'((1-\theta) a+\theta b) \d\theta.$
        we know that $W_{k}$ satisfies \begin{equation}\label{eq3}				
        -W_{k}'(t)+(u_{k}+\sum_{j\neq {k}}\Lambda_{j,{k}}(t))W_{k}(t)=u_{k}\Delta R(t)+\sum_{j\neq {k}}\Lambda_{j,{k}}(t)W_j(t),
		\end{equation}
		where {\small$$\Lambda_{j,{k}}(t)=\int_0^1 \exp\left\{\frac{\theta V^{(1)}_j(t)+(1-\theta)V^{(2)}_j(t)-(\theta V^{(1)}_{k}(t)+(1-\theta)V^{(2)}_{k}(t))-g_{{k},j}}{\eta}\right\}\d\theta\ge 0.$$}Denote by $\Lambda_k(t)=\sum_{j\neq k}\Lambda_{j,k}(t)$. Fix $t_0>0$ and define $E(t)=\exp\left\{-\int_{t_0}^t (u_k+\Lambda_k(s))\d s\right\}$, then by solving \eqref{eq3} directly, we obtain for $t>t_0$: $$W_k(t_0)=W_k(t)E(t)+\int_{t_0}^t E(s)\left[u_k\Delta R(s)+\sum_{j\neq {k}}\Lambda_{j,{k}}(s)W_j(s)\right]\d s.$$
		Using triangle inequality and we obtain $$|W_k(t_0)|\le |W_k(t)|E(t)+\int_{t_0}^t E(s)\left[u_k|\Delta R(s)|+\sum_{j\neq {k}}\Lambda_{j,{k}}(s)|W_j(s)|\right]\d s.$$
		Letting $t\rightarrow +\infty$. Notice that $|W_k(t)|$ is upper-bounded, we have $$|W_k(t_0)|\le \int_{t_0}^{+\infty} \exp\left\{-\int_{t_0}^s (u_k+\Lambda_k(\tau))\d \tau\right\}\left[u_k|\Delta R(s)|+\sum_{j\neq {k}}\Lambda_{j,{k}}(s)|W_j(s)|\right]\d s.$$
		Take $\alpha\in (0,\underline{u})$ and multiply both sides of the equation with $e^{-\alpha t_0}$ to obtain $$e^{-\alpha t_0}|W_k(t_0)|\le \int_{t_0}^{+\infty} e^{-\alpha t_0}\exp\left\{-\int_{t_0}^s (u_k+\Lambda_k(\tau))\d \tau\right\}\left[u_k|\Delta R(s)|+\sum_{j\neq {k}}\Lambda_{j,{k}}(s)|W_j(s)|\right]\d s.$$
		Define $||W||_\alpha=\underset{k,t}{\sup}\{e^{-\alpha t}|W_k(t)|\}$ and $||\Delta R||_\alpha=\underset{t}{\sup}\{e^{-\alpha s}|\Delta R(t)|\}$, then we have $$e^{-\alpha t_0}|W_k(t_0)|\le \int_{t_0}^{+\infty} \exp\left\{-\int_{t_0}^s (u_k+\Lambda_k(\tau)-\alpha)\d \tau\right\}\left[u_k||\Delta R||_\alpha+\Lambda_{{k}}(s)||W||_\alpha\right]\d s.$$
		Define $\Gamma_k(t)\triangleq u_k+\Lambda_k(t)-\alpha>\underline{u}-\alpha>0$, the above inequality can be rearranged to \begin{equation}\label{eq4}
			e^{-\alpha t_0}|W_k(t_0)|\le ||W||_\alpha\int_{t_0}^{+\infty}\Gamma_k(s)e^{-\int_{t_0}^s \Gamma_k(\tau)\d\tau}\d s+[u_k||\Delta R||_\alpha-(u_k-\alpha)||W||_\alpha]\int_{t_0}^{+\infty}e^{-\int_{t_0}^s \Gamma_k(\tau)\d\tau}\d s.
		\end{equation}
		Note that $$\int_{t_0}^{+\infty}\Gamma_k(s)e^{-\int_{t_0}^s \Gamma_k(\tau)\d\tau}\d s=1.$$
		Clearly, there exists an upper bound $M>0$ for all $\Lambda_k$. It thus holds that $$\int_{t_0}^{+\infty}e^{-\int_{t_0}^s \Gamma(\tau)\d\tau}\d s\ge \int_{t_0}^{+\infty}e^{-(\overline{u}+M-\alpha)(s-t_0)}\d s=\frac{1}{\overline{u}+M-\alpha}>0.$$
		We then arrive at $u_k||\Delta R||_\alpha-(u_k-\alpha)||W||_\alpha\ge 0$ for some $k$ in order that \eqref{eq4} holds for every $t_0\in[0,+\infty)$ and $k\in\{1,2,\cdots,K\}$. 
        % Choose a sequence (k_n,t_n) such that $e^{-\alpha t_n}|W_{k_n}}(t_n)|\rightarrow ||W||_\alpha$. By taking subsequence, we may assume $k_n\equiv k^*$. Then the inequality must hold for $k^*$.
        Recall that $R$ is Lipschitz continuous and we denote by $r$ its Lipschitz constant, then $||\Delta R||_\alpha\le rd_\alpha(\rho_1,\rho_2)$, and we finally obtain $$\underset{k}{\max}\;d_\alpha(V_k^{(1)},V_k^{(2)})=||W||_\alpha\le\frac{\overline{u}r}{\underline{u}-\alpha}d_\alpha(\rho_1,\rho_2),$$ which is exactly the desired result.
	\end{proof}

    Using the explicit expressions in \eqref{optimal_control} and \eqref{softmax}, we readily get that the best response mapping $$\begin{matrix}
			 C([0,+\infty))^K & \rightarrow  & C([0,+\infty))^{K\times K}\times\mathcal{P}(\mathbb{U}) \\
			 V^\eta & \mapsto &(\pi,\Delta) 
		\end{matrix}$$ is continuous. Finally, the continuous dependence of solutions on initial conditions and parameters yields that the solution mapping $$\begin{matrix}
			 C([0,+\infty))^{K\times K}\times\mathcal{P}(\mathbb{U}) & \rightarrow&\mathcal{D}\\
			(\pi,\Delta) &\mapsto &\Psi^\eta(\rho)
		\end{matrix}$$ is continuous as well. Consequently, as a composition of three continuous mappings, $\Psi$ itself is also continuous. Invoking the Schauder's fixed-point theorem, we obtain the main result of this section:

	\begin{theorem}\label{existence}
		There exists a regularized mean-field equilibrium in the rank-based MFG under entropy regularization.
	\end{theorem}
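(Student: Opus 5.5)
The plan is to apply Schauder's fixed-point theorem in the form of \citet[Corollary 17.56]{aliprantis2006}: a continuous self-map of a nonempty convex compact subset of a locally convex Hausdorff topological vector space has a fixed point. The ambient space is $C([0,+\infty))$ with the topology of uniform convergence on compacts. This topology is metrized by $d_\alpha$ for any $\alpha>0$, and it is locally convex, being generated by the seminorms $\sup_{[0,n]}|\cdot|$. Lemma \ref{compactness} already gives that $\mathcal{D}$ is nonempty, convex and compact. It therefore remains to check two things about $\Psi^\eta$ defined in \eqref{mapping}: that it maps $\mathcal{D}$ into $\mathcal{D}$, and that it is continuous for $d_\alpha$. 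A fixed point $\rho=\Psi^\eta(\rho)$ is then by definition a regularized mean-field equilibrium.

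\textbf{Step 1 (self-map).} Fix $\rho\in\mathcal{D}$. Theorem \ref{ex_un} and Theorem \ref{verification2} give the bounded $V^\eta$ and the best response $(\pi,\Delta)$ via \eqref{optimal_control}--\eqref{softmax}. Since $V^\eta$ is bounded, the rates $\pi_{kj}$ are bounded and continuous, so the linear system \eqref{eq13} has a unique global solution $m$. I will show $m_k\ge0$ and $\sum_k m_k\le1$ by noting the following:
\begin{itemize}
\item the off-diagonal coefficients $\pi_{jk}$ are nonnegative, so the system is cooperative and positivity is preserved;
\item summing \eqref{eq13} gives $\frac{\d}{\d t}\sum_k m_k=-\sum_k u_k m_k\le 0$, because the rows of $\pi$ sum to zero.
\end{itemize}
Setting $\rho^{\rm new}(t)=\int_0^t\sum_k u_k m_k$, we get $\rho^{\rm new}(0)=0$ and $\rho^{\rm new}=1-\sum_k m_k\le1$. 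The increments satisfy $0\le\rho^{\rm new}(t)-\rho^{\rm new}(s)\le\bar u(t-s)$, since $0\le\sum_k u_km_k\le\bar u\sum_k m_k\le\bar u$. Hence $\Psi^\eta(\rho)\in\mathcal{D}$.

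\textbf{Step 2 (continuity).} Let $\rho_n\to\rho$ in $d_\alpha$, and take $\alpha<\underline u$. By Theorem \ref{stability}, $V^{(n)}\to V$ in $d_\alpha$, hence uniformly on every $[0,T]$. I also need the uniform bound on $V^{(n)}$ over all of $\mathcal{D}$, which is noted in the proof of Theorem \ref{ex_un}.
\begin{itemize}
\item The map $y\mapsto\exp(y/\eta)$ is locally Lipschitz, so $\pi^{(n)}\to\pi$ uniformly on $[0,T]$.
\item Since $V^{(n)}(0)\to V(0)$, the softmax weights satisfy $\Delta^{(n)}\to\Delta$.
\item On $[0,T]$, the linear systems \eqref{eq13} have uniformly bounded coefficients. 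Gronwall's inequality applied to $m^{(n)}-m$ then gives $\sup_{[0,T]}|m^{(n)}-m|\le C_T\bigl(|\Delta^{(n)}-\Delta|+\sup_{[0,T]}|\pi^{(n)}-\pi|\bigr)\to0$.
\end{itemize}
Integrating \eqref{eq14} gives $\Psi^\eta(\rho_n)\to\Psi^\eta(\rho)$ uniformly on compacts, which is equivalent to convergence in $d_\alpha$.

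The main obstacle is the uniformity on compacts in Step 2. The constant $C_T$ grows with $T$, so convergence on all of $[0,\infty)$ cannot be obtained directly. This is harmless for two reasons. First, the $d_\alpha$ topology only requires uniform convergence on each compact. Second, on $[T,\infty)$ both $\rho^{\rm new}$ and $\rho_n^{\rm new}$ lie in $[1-e^{-\underline u T},1]$ by Step 1, so the tail contributes at most $e^{-\alpha T}$ to $d_\alpha$. Together these give continuity of $\Psi^\eta$, and Schauder's theorem yields the fixed point.
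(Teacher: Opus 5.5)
Your proposal is correct and follows essentially the same route as the paper. Both apply Schauder's theorem on the compact convex set $\mathcal{D}$ from Lemma \ref{compactness}, and both obtain continuity of $\Psi^\eta$ by composing the stability estimate of Theorem \ref{stability}, the explicit Gibbs/softmax formulas \eqref{optimal_control}--\eqref{softmax}, and continuous dependence for the Kolmogorov system \eqref{eq13}. You also supply two details the paper leaves implicit: the check that $\Psi^\eta(\mathcal{D})\subset\mathcal{D}$ (via positivity and $\frac{\d}{\d t}\sum_k m_k=-\sum_k u_k m_k$), and an explicit Gronwall-on-compacts argument where the paper only appeals to continuous dependence on parameters.
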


    \subsection{Policy iteration with fictitious play}
    In this subsection, we propose a fictitious play algorithm for policy iteration to approximate the equilibrium of the regularized problem, which is presented as follows:

    \begin{algorithm}[h]
    \caption{Fictitious play algorithm}
    \KwData{A number of steps $N$ for the equilibrium approximation and an initial guess $\rho^{(0)}\in \mathcal{D}$.}

    \KwResult{Approximate Nash equilibrium of the entropy regularized problem.}
    
    \For{$n=0,1,\cdots,N-1$}{
        Derive the solution $V^{(n)}$ to the HJB equation \eqref{HJB} given $\rho^{(n)}$\;
        Compute the optimal control policy ${\pi}^{(n)}$ given by ${\pi}_{kj}^{(n)}(t)=\exp\left(\frac{V_j^{(n)}(t)-V_k^{(n)}(t)-g_{k,j}}{\eta}\right)$\;
        Compute the optimal initial distribution $\Delta^{(n)}$ given by softmax $p^{(n)}_k=\frac{\exp\{V^{(n)}_k(0)/\eta\}}{\sum_{j}\exp\{V^{(n)}_j(0)/\eta\}}$\;
        Derive the solution $\hat{m}^{(n)}$ to the Kolmogorov forward equation \eqref{eq13} given $\pi^{(n)},\Delta^{(n)}$\;
        %Compute the total flux matrix $\hat{w}^{(n)}$ given by $\hat{w}^{(n)}_{kj}(t)=\hat{m}^{(n)}_k(t)\pi^{(n)}_{kj}(t)$\;
        Update policy $m^{(n+1)}=\frac{1}{n+1}\hat{m}^{(n)}+\frac{n}{n+1}m^{(n)}$\;
        %Update flux $w^{(n+1)}=\frac{1}{n+1}\hat{w}^{(n)}+\frac{n}{n+1}w^{(n)}$\;
        Update mean field distribution $\rho^{(n+1)}(t)=1-\sum_{k=1}^K m^{(n+1)}_k(t)$\;
    }
\end{algorithm}

    We shall need the following convexity assumption:
    \begin{assumption}\label{convex}
        The reward scheme $R:[0,1]\rightarrow[0,+\infty)$ is a convex function.
    \end{assumption}

    \noindent  If $m_k(t)\neq 0, \forall t>0,\;\forall k\in\{1,2,\cdots, K\}$, then the objective function of the representative agent given $\rho$ can be written in the following equivalent form:
    \fontsize{10pt}{10pt}\selectfont
    \begin{align*}
    J(m,w;\rho)&=\int_0^{+\infty}R(\rho(t))\left(\sum_{k}u_km_k(t)\right)\d t-\int_0^{+\infty}\sum_{k}c(u_k)m_k(t)\d t\\
        &-\int_0^{+\infty}\sum_{k}\sum_{j\neq k}w_{kj}(t)g_{k,j}+\eta\left(w_{kj}(t)\log\left(\frac{w_{kj}(t)}{m_{k}(t)}\right)-w_{kj}(t)\right)\d t-\eta \sum_k m_k(0)\log m_k(0),
    \end{align*}
     \fontsize{11pt}{11pt}\selectfont
    where $w_{kj}(t)=m_k(t)\pi_{kj}(t)$. In fact, given any $\rho\in\mathcal{D}$, the value function $V^\eta$ is uniformly bounded as in the proof of Theorem \ref{ex_un}. Thus for the best response $(\pi,\Delta)$, we know that $p_k$ and $\pi_{kj}$ are both uniformly bounded away from $0$.
    Assume that $\pi_{kj}(t)\in [a,b]\subset (0,+\infty),\;\forall k\neq j\in\{1,2,\cdots,K\},\forall t>0$.
    Let $\hat{m}$ be the solution of \eqref{eq13} given $(\pi,\Delta)$, direct application of Gronwall's inequality yields $$\hat{m}_k(t)\ge p_k \exp\{-(u_k+Kb)\cdot t\}.$$
    Thus $\hat{m}_k(t)$ is uniformly bounded away from $0$ on compact sets, regardless of the choice of $\rho\in\mathcal{D}$.
 
   Let us define the total flux matrix $\hat{w}^{(n)}$ by $\hat{w}^{(n)}_{kj}(t)=\hat{m}^{(n)}_k(t)\pi^{(n)}_{kj}(t)$
    and the flux update $$w^{(n+1)}=\frac{1}{n+1}\hat{w}^{(n)}+\frac{n}{n+1}w^{(n)}.$$
    Also, define the aggregate progress of best response $\hat{\rho}^{(n)}(t)=1-\sum_{k=1}^K \hat{m}^{(n)}_k(t)$ such that $$\rho^{(n+1)}(t)=\frac{1}{n+1}\hat{\rho}^{(n)}(t)+\frac{n}{n+1}\rho^{(n)}(t),\qquad (\hat{\rho}^{(n)})'(t)=\sum_{k=1}^Ku_k\hat{m}^{(n)}_k(t).$$ By induction, we see that $$({\rho}^{(n)})'(t)=\sum_{k=1}^Ku_k{m}^{(n)}_k(t)$$ holds for all $n\ge1$.
    \begin{theorem}\label{convergence_algorithm}
        Let Assumptions \ref{basic_assumption} and \ref{convex} hold. Define the exploitability error by $$E_{n}=J(\hat{m}^{(n)},\hat{w}^{(n)};\rho^{(n)})-J(m^{(n)},w^{(n)};\rho^{(n)}).$$
        Then we have $\underset{n\rightarrow\infty}{\lim}E_n=0.$
    \end{theorem}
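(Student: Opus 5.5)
Our plan is to follow the standard fictitious-play convergence argument for potential/monotone mean field games in the occupation-measure formulation (in the spirit of Hadikhanloo--Silva and Perrin et al.). We introduce the potential functional defined on pairs $(m,w)$ with $\rho=1-\sum_k m_k$:
\begin{align*}
\Phi(m,w)=&\int_0^{+\infty}\mathcal{R}(\rho(t))\,\d t-\int_0^{+\infty}\sum_k c(u_k)m_k(t)\,\d t\\
&-\int_0^{+\infty}\sum_k\sum_{j\neq k}\Big(w_{kj}g_{k,j}+\eta\big(w_{kj}\log\tfrac{w_{kj}}{m_k}-w_{kj}\big)\Big)\d t-\eta\sum_k m_k(0)\log m_k(0),
\end{align*}
where $\mathcal{R}$ is chosen so that the Gateaux derivative of the first term in direction $\rho$ reproduces $\int R(\rho)\rho'\,\d t$. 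Concretely, we use $\int_0^\infty R(\rho)\rho'\,\d t$ linearized around the current $\rho$. Since $\rho'=\sum_k u_km_k$ is linear in $m$, the reward part of $J(m,w;\bar\rho)$ equals $\int R(\bar\rho)\,\d\rho$. The constraint set of pairs $(m,w)$ satisfying \eqref{eq13} in flux form, $m_k'=-u_km_k+\sum_j w_{jk}-\sum_j w_{kj}$, is convex. The entropy term $w\log(w/m)$ is jointly convex (perspective), so $J(\cdot,\cdot;\bar\rho)$ is concave in $(m,w)$. The key monotonicity input is Assumption \ref{convex}. For two progress curves $\rho_1,\rho_2$ we expect
\[
\int_0^\infty (R(\rho_1)-R(\rho_2))\,\d(\rho_1-\rho_2)\le 0
\]
in a suitable averaged sense. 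This reduces to Lasry--Lions monotonicity and allows a one-step descent estimate.

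The key steps, in order, are as follows.

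First, we show $J(\hat m^{(n)},\hat w^{(n)};\rho^{(n)})=\max J(\cdot;\rho^{(n)})$, using Theorem \ref{verification2} and the occupation-measure reformulation, where positivity of $\hat m$ is guaranteed by the lower bound $\hat m_k(t)\ge p_k e^{-(u_k+Kb)t}$. Hence $E_n\ge 0$.

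Second, we write $m^{(n+1)}-m^{(n)}=\frac{1}{n+1}(\hat m^{(n)}-m^{(n)})$, and similarly for $w$ and $\rho$. We then compare $E_{n+1}$ with $E_n$ by splitting
\[
E_{n+1}=\big[J(\hat m^{(n+1)};\rho^{(n+1)})-J(\hat m^{(n)};\rho^{(n+1)})\big]+\big[J(\hat m^{(n)};\rho^{(n+1)})-J(m^{(n+1)};\rho^{(n+1)})\big].
\]
The first bracket is handled via stability of best responses. Theorem \ref{stability} and the Lipschitz dependence of the Gibbs policy and Kolmogorov solution give a Lipschitz dependence of $\hat m$ on $\rho$ in $d_\alpha$. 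Combined with the optimality of $\hat m^{(n+1)}$ (a quadratic-type gap), this bracket is $O(1/n^2)$. For the second bracket we use concavity of $J(\cdot;\bar\rho)$ in $(m,w)$ to get a bound by $(1-\frac1{n+1})E_n$, plus the cross term coming from changing $\rho^{(n)}\to\rho^{(n+1)}$ in the reward. That cross term is $\frac{1}{n+1}\int (R(\rho^{(n+1)})-R(\rho^{(n)}))\,\d(\hat\rho^{(n)}-\rho^{(n)})$, which is $\le 0$ by convexity/monotonicity of $R$ up to an $O(1/n^2)$ error, using that $R$ is Lipschitz.

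Third, the resulting recursion $E_{n+1}\le(1-\frac1{n+1})E_n+\frac{C}{(n+1)^2}$ yields $E_n=O(\log n/n)\to0$.

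The main obstacle is the monotonicity step: making rigorous that convexity of $R$ yields $\int_0^\infty (R(\rho_1)-R(\rho_2))\,\d(\rho_1-\rho_2)\le0$, or the correct one-sided inequality. We first try the antiderivative trick. Set $\mathcal{G}(\rho)=\int_0^\infty G(\rho(t))\,\d t$-like functionals with $G'=R$ evaluated along $\d\rho$. Since $\rho(\infty)=1$ for all admissible flows (exponential decay of $1-\rho$, using $\underline u>0$), the integrals $\int_0^\infty R(\rho)\,\d\rho=\int_0^1R$ are constant. Differentiating the convex map $s\mapsto\int R(\rho_2+s(\rho_1-\rho_2))\,\d(\rho_2+s(\rho_1-\rho_2))$ then gives the sign after an integration by parts. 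A secondary technical point is the uniform control of integrals over $[0,\infty)$ and of $\log(w/m)$ near $m\to0$ at infinity. Here we rely on the uniform bounds $\pi_{kj}\in[a,b]$ and the $e^{-\underline u t}$ decay of all $m_k$, which make all tails uniformly summable.
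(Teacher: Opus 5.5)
Your architecture is the paper's: exploitability, concavity of $J(\cdot;\bar\rho)$ via the perspective of $x\log x-x$, a stability term that is a product of two $O(1/n)$ differences, a monotone term signed by convexity of $R$, and Polyak's lemma. Call your two brackets $B_1$ and $B_2$; once corrected, your split $E_{n+1}=B_1+B_2$ reproduces the paper's $I_1$ and $I_2$. However, the bookkeeping is wrong exactly at the crux.

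\textbf{The monotone term.} Concavity of $J(\cdot;\rho^{(n+1)})$ gives $B_2\le\frac{n}{n+1}\big[J(\hat m^{(n)};\rho^{(n+1)})-J(m^{(n)};\rho^{(n+1)})\big]$. The cross term is therefore
\[
\frac{n}{n+1}\int_0^\infty\big(R(\rho^{(n+1)})-R(\rho^{(n)})\big)\,\d\big(\hat\rho^{(n)}-\rho^{(n)}\big)=n\int_0^\infty\big(R(\rho^{(n+1)})-R(\rho^{(n)})\big)\,\d\big(\rho^{(n+1)}-\rho^{(n)}\big),
\]
not $\frac{1}{n+1}\int\cdots$. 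With your coefficient, the Lipschitz bound alone would give $O(n^{-2})$, and Assumption \ref{convex} would be superfluous. With the correct coefficient, the Lipschitz bound gives only $O(1/n)$, and $E_{n+1}\le\frac{n}{n+1}E_n+\frac{C}{n}$ only shows that $E_n$ stays bounded. So exact nonpositivity of this term is indispensable.

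Your proof of that sign also does not work as stated. The map $s\mapsto\int R(\rho_s)\,\d\rho_s$ is constant (equal to $\int_0^1R$), not convex, so differentiating it only yields an identity. Your correct observation $\int_0^\infty R(\rho)\,\d\rho=\int_0^1R(u)\,\d u$ should instead be combined with one integration by parts, using $\rho_i(0)=0$ and $\rho_i(\infty)=1$. For $r(\rho_2)\in\partial R(\rho_2)$ this gives
\[
\int_0^\infty\big(R(\rho_1)-R(\rho_2)\big)\,\d(\rho_1-\rho_2)=-\int_0^\infty\big[R(\rho_1)-R(\rho_2)-r(\rho_2)(\rho_1-\rho_2)\big]\,\d\rho_2\le0,
\]
which is the paper's identity $\int_0^\infty A'(t)\,\d t=0$. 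Note that it uses $\d\rho_2\ge0$, i.e.\ that progress curves are nondecreasing, which you never invoke.

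\textbf{The potential.} Your $\Phi$ cannot exist. The $\rho$-derivative of $\delta\mapsto\int R(\rho)\,\d\delta$ is the bilinear form $(\epsilon,\delta)\mapsto\int R'(\rho)\,\epsilon\,\delta'\,\d t$, which is not symmetric in general. So the game is monotone but not potential. Since you never actually use $\Phi$, simply drop it.

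\textbf{The stability term.} For $B_1$, invoke the optimality of $\hat m^{(n)}$ for $\rho^{(n)}$, not that of $\hat m^{(n+1)}$. The latter only gives $B_1\ge0$, and a second-order expansion would need curvature bounds on the entropy term, which degenerate as $m,w\to0$ at infinity. Subtracting $J(\hat m^{(n+1)};\rho^{(n)})-J(\hat m^{(n)};\rho^{(n)})\le0$ leaves exactly $I_1=\int_0^\infty\big(R(\rho^{(n+1)})-R(\rho^{(n)})\big)\,\d\big(\hat\rho^{(n+1)}-\hat\rho^{(n)}\big)$.

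This in turn needs $\int_0^\infty\|\hat m^{(n+1)}(t)-\hat m^{(n)}(t)\|_1\,\d t=O(1/n)$. A $d_\alpha$-Lipschitz bound on $\hat m$ does not give this by itself, since $e^{\alpha t}/n$ is not integrable. It does follow by feeding the $V$-estimate into the variation-of-constants formula together with the $e^{-\underline u t}$ bounds \eqref{eq19}. The paper does exactly this after proving the sharper estimate $\|V^{(n+1)}-V^{(n)}\|_\infty\le C/(n+1)$.
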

    \begin{proof}
        As $f(x)=x\log(x)-x$ is a convex function, its perspective function, given by $g(x,t)=tf(\frac{x}{t})$, is jointly convex in $(x,t)$. This implies that $J(m,w;\rho)$ is a concave function of $(m,w)$. Thus, it holds that
        \begin{align*}
    E_{n+1}&=J(\hat{m}^{(n+1)},\hat{w}^{(n+1)};\rho^{(n+1)})-J(m^{(n+1)},w^{(n+1)};\rho^{(n+1)})\\
            &= J(\hat{m}^{(n+1)},\hat{w}^{(n+1)};\rho^{(n)})-J(m^{(n+1)},w^{(n+1)};\rho^{(n+1)})\\
            &+\int_0^{+\infty} \left(R(\rho^{(n+1)}(t))-R(\rho^{(n)}(t))\right)\left(\sum_ku_k\hat{m}^{(n+1)}_k(t)\right)\d t\\
            &\le J(\hat{m}^{(n)},\hat{w}^{(n)};\rho^{(n)})-\frac{1}{n+1}J(\hat{m}^{(n)},\hat{w}^{(n)};\rho^{(n)})-\frac{n}{n+1}J({m}^{(n)},{w}^{(n)};\rho^{(n)})\\
            & +\int_0^{+\infty} \left(R(\rho^{(n+1)}(t))-R(\rho^{(n)}(t))\right)\left(\sum_ku_k\left(\hat{m}^{(n+1)}_k(t)-m_k^{(n+1)}(t)\right)\right)\d t\\
            &=\frac{n}{n+1}E_n+\int_0^{+\infty} \left(R(\rho^{(n+1)}(t))-R(\rho^{(n)}(t))\right)\left(\sum_ku_k(\hat{m}^{(n+1)}_k(t)-\hat{m}^{(n)}_k(t))\right)\d t\\
            &+n\int_0^{+\infty} \left(R(\rho^{(n+1)}(t))-R(\rho^{(n)}(t))\right)\left(\sum_ku_k({m}^{(n+1)}_k(t)-{m}^{(n)}_k(t))\right)\d t.
        \end{align*}
        Consider the second integral term $$I_2=n\int_0^{+\infty} \left(R(\rho^{(n+1)}(t))-R(\rho^{(n)}(t))\right)\left((\rho^{(n+1)})'(t)-(\rho^{(n)})'(t)\right)\d t.$$
        % Don't know the equation that $m^{(n)}$ should satisfy, therefore monotonicity condition is inevitable.
        Define $$A(t)=\int_{\rho^{(n)}(t)}^{\rho^{(n+1)}(t)}R(u)\d u-\left(\rho^{(n+1)}(t)-\rho^{(n)}(t)\right)R(\rho^{(n)}(t)).$$
        As $\rho^{(n)},\rho^{(n+1)}\in\mathcal{D}$ are Lipschitz continuous and $R$ is convex, we see that $A$ is absolutely continuous. It is clear that $A(0)=A(+\infty)=0$. Therefore, by the fundamental theorem of calculus, we have $\int_0^{+\infty} A'(t)\d t=0$.
        The derivative (up to a.e. equivalence) of $A(t)$ is given by \begin{align*}
            A'(t)&=\left(R(\rho^{(n+1)}(t))-R(\rho^{(n)}(t))\right)\left((\rho^{(n+1)})'(t)-(\rho^{(n)})'(t)\right)\\
            &+(\rho^{(n)})'(t)\left(R(\rho^{(n+1)}(t))-R(\rho^{(n)}(t))-r(\rho^{(n)}(t))\left(\rho^{(n+1)}(t)-\rho^{(n)}(t)\right)\right),
        \end{align*}
        where $r(\rho^{(n)}(t))\in\partial R(\rho^{(n)}(t))$, the sub-differential of $R$ at point $\rho^{(n)}(t)$. The convexity of $R$ gives $$R(\rho^{(n+1)}(t))-R(\rho^{(n)}(t))-r(\rho^{(n)}(t))\left(\rho^{(n+1)}(t)-\rho^{(n)}(t)\right)\ge 0.$$
        Noting the fact that $(\rho^{(n)})'(t)\ge 0$, we obtain $I_2\le 0.$

       Consider the first integral term 
        {\small
        $$I_1=\int_0^{+\infty} \left(R(\rho^{(n+1)}(t))-R(\rho^{(n)}(t))\right)\left(\sum_ku_k(\hat{m}^{(n+1)}_k(t)-\hat{m}^{(n)}_k(t))\right)\d t.$$}By definition, we have $\rho^{(n+1)}-\rho^{(n)}=\frac{1}{n+1}\left(\hat{\rho}^{(n)}-\rho^{(n)}\right)$, and we immediately obtain $|\rho^{(n+1)}(t)-\rho^{(n)}(t)|\le \frac{1}{n+1}.$ Using the Lipschitz continuity of $R$, we have $\left|R(\rho^{(n+1)}(t))-R(\rho^{(n)}(t))\right|<\frac{r}{n+1}$.
         Note that $V^{(n)}_k$ is the unique bounded solution to the equation 
         {\small\begin{equation}\label{eq16}
             -(V_k^{(n)})'(t)=u_k\left(R(\rho^{(n)}(t))-V_k^{(n)}(t)\right)-c(u_k)+\eta\sum_{j\neq k}\exp\left(\frac{V_j^{(n)}(t)-V_k^{(n)}(t)-g_{k,j}}{\eta}\right).
         \end{equation}}
        An application of the mean-value theorem shows that there exists $\tilde{\pi}_{kj}(t)> 0,\forall k\neq j$ such that 
        {\small\begin{align*}
            -\left((V_k^{(n+1)})'(t)-(V_k^{(n)})'(t)\right)&=u_k\left(R(\rho^{(n+1)}(t))-R(\rho^{(n)}(t))\right)-u_k\left(V_k^{(n+1)}(t)-V_k^{(n)}(t)\right)\\
            &+\sum_{j\neq k}\tilde{\pi}_{kj}(t)\left(V_j^{(n+1)}(t)-V_j^{(n)}(t)-(V_k^{(n+1)}(t)-V_k^{(n)}(t))\right).
        \end{align*}}Define $$M(t)=\underset{k}{\max}|V_k^{(n+1)}(t)-V_k^{(n)}(t)|.$$
        It is clear that $M(t)$ is bounded as $t\rightarrow +\infty$. Denote by $k^*$ the argument such that the maximum in the definition of $M(t)$ is achieved. If $V_{k^*}^{(n+1)}(t)-V_{k^*}^{(n)}(t)\ge0$, we have $$-D^+M(t)\le -\left((V_{k^*}^{(n+1)})'(t)-(V_{k^*}^{(n)})'(t)\right)\le u_{k^*}\left(R(\rho^{(n+1)}(t))-R(\rho^{(n)}(t))\right)-u_{k^*}M(t).$$
        If $V_{k^*}^{(n+1)}(t)-V_{k^*}^{(n)}(t)<0$, we have $$D^+M(t)\ge -\left((V_{k^*}^{(n+1)})'(t)-(V_{k^*}^{(n)})'(t)\right)\ge u_{k^*}\left(R(\rho^{(n+1)}(t))-R(\rho^{(n)}(t))\right)+u_{k^*}M(t).$$
        Either way, we obtain $$-D^+M(t)\le u_{k^*}\left|R(\rho^{(n+1)}(t))-R(\rho^{(n)}(t))\right|-u_{k^*}M(t)\le\frac{\overline{u}r}{n+1}-\underline{u}M(t),$$ 
        which shows that $N(t)=e^{-\underline{u}t}\left(M(t)-\frac{\overline{u}r}{\underline{u}(n+1)}\right)$
        is an increasing function of $t\in [0,+\infty)$. Therefore, we have for $T>t\ge0$:
        $$M(t)\le e^{-\underline{u}(T-t)}\left(M(T)-\frac{\overline{u}r}{\underline{u}(n+1)}\right)+\frac{\overline{u}r}{\underline{u}(n+1)}.$$ 
        Letting $T\rightarrow +\infty$ and noting that $M(T)$ is bounded, we finally obtain $$M(t)\le \frac{\overline{u}r}{(n+1)\underline{u}}=\frac{C}{n+1},\quad \forall t\ge0.$$
        For the rest of the proof, $C$ denote a generic constant (independent of $n$) whose value may change from line to line. We have established $||V^{(n+1)}-V^{(n)}||_\infty\le \frac{C}{n+1}$. Using the definition of best response $\hat{\pi}^{(n)}$ and the local Lipschitz continuity of exponential function, we obtain $||Q^{(n+1)}(t)-Q^{(n)}(t)||_\infty\le \frac{C}{n+1}$ as well, where $$Q_{kj}^{(n)}(t)={\pi}_{kj}^{(n)}(t),\;k\neq j;\qquad Q_{kk}^{(n)}(t)=-u_k-\sum_{j\neq k}{\pi}_{kj}^{(n)}(t).$$
        Define $\Delta Q(t)=Q^{(n+1)}(t)-Q^{(n)}(t)$ and column vector $\delta(t)=\hat{m}^{(n+1)}(t)-\hat{m}^{(n)}(t)$, we have \begin{equation}\label{eq15}
            \delta'(t)=Q^{(n+1)}(t)^\top \delta(t)+\Delta Q(t)^\top\hat{m}^{(n)}(t)
        \end{equation}
        by the Kolmogorov forward equation. Denote by $\Phi(t,s)$ the fundamental solution matrix satisfying $$\frac{\d}{\d t}\Phi(t,s)=Q^{(n+1)}(t)^\top \Phi(t,s),\quad \Phi(s,s)=I.$$
        Note that $-Q^{(n+1)}(t)$ is a strictly diagonally dominant matrix. Using a basic solution estimate (see \citet[Section 2.5]{vidyasagar2002}), we have \begin{equation}\label{eq19}
            ||\Phi(t,s)||_1\le e^{-\underline{u}(t-s)},\qquad||\hat{m}^{(n)}(t)||_1\le e^{-\underline{u} t}
        \end{equation} 
        % they satisfy the same equation.
        Solving \eqref{eq15} by the variation of constants formula, we obtain $$\delta(t)=\delta(0)\Phi(t,0)+\int_0^t \Phi(t,s)\Delta Q(s)^\top\hat{m}^{(n)}(s)\d s.$$
        We obviously have $|\delta(0)|\le \frac{C}{n+1}$, 
        % the definition of softmax initial distribution.
        which, together with the previous estimates, gives $$||\delta(t)||_1\le \frac{C}{n+1}e^{-\underline{u}t}+C\int_0^t  e^{-\underline{u}(t-s)}\cdot \frac{C}{n+1}\cdot e^{-\underline{u}s}\d s\le \frac{C}{n+1}e^{-\underline{u}t}(1+Ct).$$
        % 1-norm is equivalent to the infinity norm.
        This implies that $$I_1\le \int_0^{+\infty}\frac{r}{n+1}\cdot \overline{u}\cdot||\delta(t)||_1\d t\le \frac{C}{(n+1)^2},$$
        and hence \begin{equation}\label{convergence_rate}
            E_{n+1}\le \frac{n}{n+1}E_n+\frac{C}{(n+1)^2}.
        \end{equation}
        By \citet[Lemma 2.2.3]{polyak1987}, we conclude that $\underset{n\rightarrow\infty}{\lim}E_n=0$, which completes the proof.
    \end{proof}
    \begin{remark}
        From \eqref{convergence_rate} and \citet[Lemma 2.2.4]{polyak1987}, we see that the convergence rate of our fictitious play algorithm is given by $E_n=\mathcal{O}(\frac{\log(n)}{n}).$
    \end{remark}

    By the proof of Theorem \ref{convergence_algorithm}, we see that Assumption \ref{convex} assures a desired property of our entropy regularized problem similar to the renowned Lasry-Lions monotonicity condition (see \citet{lasry2007}). Such a condition is closely related to the uniqueness of equilibrium, as stated in the following theorem.
    \begin{theorem}\label{uniqueness}
     Under Assumptions \ref{basic_assumption} and \ref{convex}, $\rho^{(n)}$ converges in $\mathcal{D}$ to the unique regularized mean-field equilibrium.
    \end{theorem}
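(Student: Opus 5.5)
We argue in two parts. First we prove uniqueness of the regularized equilibrium from the monotonicity identity already used in the proof of Theorem \ref{convergence_algorithm}. Then we deduce convergence of $\rho^{(n)}$ from $E_n\to 0$ together with compactness of $\mathcal{D}$ (Lemma \ref{compactness}) and continuity of $\Psi^\eta$.

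\emph{Uniqueness.} Let $\rho_1,\rho_2$ be two fixed points of $\Psi^\eta$. Let $(m^i,w^i)$ be the corresponding equilibrium occupation and flux pairs, so $\rho_i'=\sum_k u_k m^i_k$. Since $(m^i,w^i)$ is the best response to $\rho_i$ and the best response is unique (Theorem \ref{verification2}), optimality gives
\[J(m^1,w^1;\rho_1)\ge J(m^2,w^2;\rho_1),\qquad J(m^2,w^2;\rho_2)\ge J(m^1,w^1;\rho_2).\]
One should check that $(m^2,w^2)$ is feasible for the representative agent facing $\rho_1$. It is, because feasibility, meaning that \eqref{eq13} holds with $w_{kj}=m_k\pi_{kj}$, does not depend on $\rho$. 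Adding the two inequalities, every term that does not involve $R$ cancels, and we are left with
\[\int_0^{+\infty}\big(R(\rho_1(t))-R(\rho_2(t))\big)\big(\rho_1'(t)-\rho_2'(t)\big)\,\d t\ge 0.\]
We now reuse the function $A$ from the proof of Theorem \ref{convergence_algorithm}, symmetrized. Set
\[B(t)=\int_{\rho_2(t)}^{\rho_1(t)}R(u)\,\d u.\]
Then $B(0)=B(+\infty)=0$. The limit at infinity holds because both $\rho_i\to 1$ exponentially fast. Computing $A'$ for each ordering of $(\rho_1,\rho_2)$ and adding gives
\[\int_0^{+\infty}\big(R(\rho_1)-R(\rho_2)\big)(\rho_1'-\rho_2')\,\d t=-\int_0^{+\infty}\Big[\rho_2'\,D_R(\rho_1,\rho_2)+\rho_1'\,D_R(\rho_2,\rho_1)\Big]\d t\le 0,\]
where $D_R(x,y)=R(x)-R(y)-r(y)(x-y)\ge 0$ is the convex Bregman gap. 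So the sum of the two inequalities is an equality, and hence each inequality is itself an equality. Therefore $(m^2,w^2)$ attains the optimal value against $\rho_1$. By uniqueness of the best response (Theorem \ref{verification2}) this forces $(m^2,w^2)=(m^1,w^1)$. It follows that $\rho_2=\Psi^\eta(\rho_2)=\Psi^\eta(\rho_1)=\rho_1$. This step uses only convexity of $R$, not strict convexity, because strict concavity of $J$ in $(m,w)$ comes from the entropy term.

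\emph{Convergence.} We argue by subsequences. Take any subsequence of $\rho^{(n)}$. By Lemma \ref{compactness} it has a further subsequence converging in $d_\alpha$ to some $\rho^*\in\mathcal{D}$. It suffices to show that $\rho^*$ is a fixed point. By Theorem \ref{stability} and the continuity of the downstream maps, $\hat\rho^{(n)}=\Psi^\eta(\rho^{(n)})\to\Psi^\eta(\rho^*)$ along this subsequence. The remaining point is that the averages $m^{(n)}$ converge to $\hat m^*$, the best response to $\rho^*$. Note that $m^{(n)}$ is a convex combination of feasible flows, so it is feasible. Also $E_n\to 0$ says that $J(\hat m^{(n)},\hat w^{(n)};\rho^{(n)})-J(m^{(n)},w^{(n)};\rho^{(n)})\to 0$, where the first pair is the optimum against $\rho^{(n)}$.

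The main obstacle is converting this vanishing value gap into convergence of $\rho^{(n)}-\hat\rho^{(n)}\to 0$ locally uniformly. My first attempt is a strong concavity argument: the gap should control a quadratic term in $m^{(n)}-\hat m^{(n)}$. The plan runs as follows.

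\begin{enumerate}
\item Pass to a limit in the flux variables. The $m^{(n)}$ are bounded by $e^{-\underline u t}$ and Lipschitz, so Ascoli gives a limit $\bar m$ with $\rho^*{}'=\sum_k u_k\bar m_k$. The fluxes $w^{(n)}$ are bounded, so Banach--Alaoglu gives a weak-$*$ limit $\bar w$, and $(\bar m,\bar w)$ is feasible, since \eqref{eq13} is linear in $(m,w)$.
\item By upper semicontinuity of the concave functional $J$ under these convergences, and continuity of the optimal value in $\rho$ (Theorem \ref{stability}), conclude that $J(\bar m,\bar w;\rho^*)\ge\sup J(\cdot;\rho^*)$.
\item By uniqueness of the best response, $(\bar m,\bar w)=(\hat m^*,\hat w^*)$, and hence $\rho^*=\Psi^\eta(\rho^*)$.
\end{enumerate}

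Uniqueness of the fixed point then makes every subsequential limit the same point, so the whole sequence converges.
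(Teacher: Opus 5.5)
Your uniqueness argument is the paper's Step~1. You phrase it through the equality case of non-strict inequalities rather than with strict ones, which is fine.

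For convergence you keep the paper's skeleton: subsequential compactness, pass $E_n\to 0$ to the limit, identify the limit as the best response by uniqueness of the maximizer, then use uniqueness of the fixed point. You obtain compactness of the fluxes differently, though.

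The paper proves equicontinuity of $w^{(n)}$, using a uniform bound on $\frac{\d}{\d t}\pi^{(i)}_{kj}$ that comes from the feedback form \eqref{optimal_control} and the HJB equation. This gives three things: $w^{(n_l)}\to w^{(\infty)}$ locally uniformly, $J$ is genuinely continuous along the subsequence, and the limit generator $w^{(\infty)}/m^{(\infty)}$ is continuous, hence admissible.

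You instead take a weak-$*$ limit $\bar w$ and need only upper semicontinuity of $J$, which follows from joint convexity of the perspective $m f(w/m)$. The infinite horizon causes no trouble, since $w_{kj}g_{k,j}\ge 0$ and $m f(w/m)\ge -m\ge -e^{-\underline{u}t}$ control the tails. This route avoids the derivative estimate on $\pi^{(i)}$.

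The price is that $\bar\pi=\bar w/\bar m$ is merely bounded measurable. It takes values in $[a,b]$, because $a\,m^{(n)}_k\le w^{(n)}_{kj}\le b\,m^{(n)}_k$ survives weak limits, but it lies outside the admissible class of Theorem~\ref{verification2}. So ``uniqueness of the best response'' is being invoked over a larger class than the one where it was proved. You should state explicitly that every feasible pair with measurable fluxes satisfies $J(m,w;\rho^*)\le\eta\log\sum_k e^{V_k(0)/\eta}$, with equality iff $w_{kj}=m_k\pi^*_{kj}$ a.e.\ and $m(0)$ is the softmax law. This is routine: differentiate $\sum_k m_kV_k$ along the Kolmogorov equation, insert \eqref{HJB}, and apply Fenchel's inequality pointwise. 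ODE uniqueness then gives $\bar m=\hat m^*$, and your step 3 goes through.

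There are also two small slips.
\begin{itemize}
\item Adding $A'$ for both orderings produces twice the integral, so your identity is missing a factor $\tfrac12$. A single $A$, as in the paper, already suffices, and your $B$ is never used.
\item $J$ is not strictly concave in $(m,w)$, since the perspective is positively homogeneous along rays. What your argument actually uses, correctly, is uniqueness of the maximizer from the Fenchel equality case in Theorem~\ref{verification2}.
\end{itemize}
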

    \begin{proof}
    The proof consists of 4 main steps: 

    \paragraph{Step-1.} We first show that the regularized mean-field equilibrium is unique: Suppose that $\rho_1,\rho_2$ are two different equilibria of the entropy regularized mean-field competition. Let $m_1,w_1,m_2,w_2$ be the corresponding flux processes. Direct calculation shows \begin{align*}
            J(m_1,w_1;\rho_1)&+J(m_2,w_2;\rho_2)-J(m_1,w_1;\rho_2)-J(m_2,w_2;\rho_1)\\
            &=\int_0^{+\infty} \left(R(\rho_1(t))-R(\rho_2(t))\right)\left(\rho_1'(t)-\rho_2'(t)\right)\d t\le 0.
        \end{align*}
        On the other hand, by the uniqueness of optimal control and the definition of equilibrium, we have $J(m_1,w_1;\rho_1)>J(m_1,w_1;\rho_2)$ and $J(m_2,w_2;\rho_2)>J(m_2,w_2;\rho_1)$, contradiction. Therefore, the equilibrium is unique, and we denote it by $\rho^*$.

    \paragraph{Step-2.} We prove next that $\{m^{(n)}\}_{n\ge 1}$ and $\{w^{(n)}\}_{n\ge 1}$ are uniformly bounded and equi-continuous. In fact, for each fixed component $j,k\in\{1,2,\cdots, K\}$, we have $m^{(n)}_j\in[0,1]$ and 
        \begin{equation}\label{eq18}
            w^{(n)}_{kj}=\frac{1}{n}\sum_{i=1}^n\hat{w}^{(i)}_{kj}=\frac{1}{n}\sum_{i=1}^n\hat{m}^{(i)}_k\pi^{(i)}_{kj},
        \end{equation}
        whose boundedness is inherited from the uniform boundedness of $\pi^{(n)}$. For the equi-continuity, note that $$\frac{\d}{\d t}\hat{m}_k^{(n)}(t)=-u_k \hat{m}_k^{(n)}(t) + \sum_{j \neq k}\hat{w}_{jk}^{(n)}(t)-\sum_{j \neq k} \hat{w}_{kj}^{(n)}(t),$$
        which combine with the definition of flux update gives $$\frac{\d}{\d t}{m}_k^{(n)}(t)=-u_k {m}_k^{(n)}(t) + \sum_{j \neq k}{w}_{jk}^{(n)}(t)-\sum_{j \neq k} {w}_{kj}^{(n)}(t).$$
        The uniform boundedness of $m^{(n)}$ and $w^{(n)}$ then shows that $\{m^{(n)}\}_{n\ge 1}$ are uniformly Lipschitz continuous. Finally, by \eqref{eq18}, we have $$\frac{\d}{\d t}w_{kj}^{(n)}(t)=\frac{1}{n}\sum_{i=1}^n\left(\frac{\d}{\d t}\hat{m}^{(i)}_k(t)\pi^{(i)}_{kj}(t)+\hat{m}^{(i)}_k(t)\frac{\d}{\d t}\pi^{(i)}_{kj}(t)\right),$$
        and the equi-continuity of $\{w^{(n)}\}_{n\ge 1}$ reduces to the uniform boundedness of $\frac{\d}{\d t}\pi^{(i)}_{kj}(t)$. This fact is easily obtained from the explicit feedback form \eqref{optimal_control} and the fact that $V^{(n)}$ is uniformly bounded.

        \paragraph{Step-3.} Consider any subsequence of $\{\rho^{(n)}\}_{n\ge 0}$. Using the compactness of $\mathcal{D}$ as well as the Ascoli-Arzela's lemma, we can take a further subsequence $\{\rho^{(n_l)}\}_{l\ge 1}$ such that \begin{equation}\label{eq20}
            \rho^{(n_l)}\rightarrow \rho^{(\infty)},\quad m^{(n_l)}\rightarrow m^{(\infty)},\quad w^{(n_l)}\rightarrow w^{(\infty)}
        \end{equation} uniformly on compacts. We need to show that $$\underset{l\rightarrow\infty}{\lim}J(m^{(n_l)},w^{(n_l)},\rho^{(n_l)})=J(m^{(\infty)},w^{(\infty)},\rho^{(\infty)}).$$
        Direct subtraction shows that 
        \fontsize{10pt}{10pt}\selectfont
        \begin{align*}
        J&(m^{(\infty)},w^{(\infty)},\rho^{(\infty)})-J(m^{(n_l)},w^{(n_l)},\rho^{(n_l)})\\
        &=\int_0^{+\infty}\left(R(\rho^{(\infty)}(t))\left(\sum_{k}u_km^{(\infty)}_k(t)\right)-R(\rho^{(n_l)}(t))\left(\sum_{k}u_km^{(n_l)}_k(t)\right)\right)\d t\\
        &-\int_0^{+\infty}\sum_{k}c(u_k)\left(m_k^{(\infty)}(t)-m_k^{(n_l)}(t)\right)\d t\\
        &-\int_0^{+\infty}\sum_{k}\sum_{j\neq k}\left(w^{(\infty)}_{kj}(t)-w^{(n_l)}_{kj}(t)\right)g_{k,j} \d t\\
        &-\int_0^{+\infty}\sum_{k}\sum_{j\neq k}\eta\left(w^{(\infty)}_{kj}(t)\log\left(\frac{w^{(\infty)}_{kj}(t)}{m^{(\infty)}_{k}(t)}\right)-w^{(\infty)}_{kj}(t)-\left(w^{(n_l)}_{kj}(t)\log\left(\frac{w^{(n_l)}_{kj}(t)}{m^{(n_l)}_{k}(t)}\right)-w^{(n_l)}_{kj}(t)\right)\right)\d t\\
        &-\eta \sum_k \left(m^{(\infty)}_k(0)\log m^{(\infty)}_k(0)-m^{(n_l)}_k(0)\log m^{(n_l)}_k(0)\right).
        \end{align*}
        \fontsize{11pt}{11pt}\selectfont
        The last term converges to $0$ obviously as $l\rightarrow\infty$. 
        By the solution estimate \eqref{eq19} and uniform boundedness of $\pi^{(n)}$, we have $$|m_k^{(n)}(t)|\le Ce^{-\underline{u}t},\qquad |w_{kj}^{(n)}(t)|\le Ce^{-\underline{u}t},\quad \forall t>0,\;\forall k,j\in\{1,2,\cdots,K\},\;\forall n\ge 1.$$
        Using the dominated convergence theorem, we see that the first three integral terms converges to zero. For the last integral term, notice also that $m^{(n)},w^{(n)}$ are the Cesàro mean of $\hat{m}^{(n)},\hat{w}^{(n)}$:
        $$m^{(n)}=\frac{1}{n}\sum_{i=1}^n \hat{m}^{(i)},\qquad w^{(n)}=\frac{1}{n}\sum_{i=1}^n \hat{w}^{(i)}.$$
        Therefore, inheriting from $\pi^{(n)}_{kj}=\frac{\hat{w}_{kj}^{(n)}}{\hat{m}_k^{(n)}}$, we have $\frac{w_{kj}^{(n)}}{m_k^{(n)}}$ is both uniformly bounded and uniformly bounded away from $0$. Use the dominated convergence theorem again and the result follows.

        \paragraph{Step-4.} Finally, we verify that $\rho^{(\infty)}$ is an equilibrium.
        Let $(\pi,\Delta)$ be the best response given $\rho^{(\infty)}$, let $m$ be the solution to the Kolmogorov forward equation \eqref{eq13} given $(\pi,\Delta)$, and define $w$ by ${w}_{kj}(t)={m}_k(t)\pi_{kj}(t)$. Using the continuity of mapping \eqref{mapping} that we have proved earlier, we see that $\hat{m}^{(n_l)}\rightarrow m$ and $\hat{w}^{(n_l)}\rightarrow w$ uniformly on compacts. Thus we can prove similar to the previous step and obtain $$\underset{l\rightarrow\infty}{\lim}J(\hat{m}^{(n_l)},\hat{w}^{(n_l)},\rho^{(n_l)})=J(m,w,\rho^{(\infty)}).$$
        Using Theorem \ref{convergence_algorithm}, we have $$J(m^{(\infty)},w^{(\infty)},\rho^{(\infty)})=J(m,w,\rho^{(\infty)}),$$
        which shows that $(m^{(\infty)},w^{(\infty)})$ is generated by the best response given $\rho$.
        On the other hand, we have the consistency condition 
        {\small
        $$\rho^{(\infty)}(t)=\underset{l\rightarrow\infty}{\lim} \rho^{(n_l)}(t)=\underset{l\rightarrow\infty}{\lim} \left(1-\sum_{k=1}^K m^{(n_l)}_k(t)\right)=1-\sum_{k=1}^K m^{(\infty)}_k(t),$$}which yields that $\rho^{(\infty)}$ is an equilibrium. Finally, using the uniqueness of equilibrium, we have $\rho^{(\infty)}=\rho^*$. Because any subsequence of $\{\rho^{(n)}\}_{n\ge 0}$ has a further subsequence converging to $\rho^*$, we conclude that $\rho^{(n)}\xrightarrow{\mathcal{D}}\rho^*$, as desired.
    \end{proof}

	\section{Convergence by Vanishing Entropy Regularization}\label{sec:convergence}
	Finally, we aim to address the existence of relaxed mean-field equilibrium in the original rank-based MFG of optimal switching. The goal is to establish the convergence of regularized equilibrium towards the relaxed equilibrium in the original problem as the entropy parameter $\eta\rightarrow 0$.

    Let $\rho^\eta$ be a regularized equilibrium with the entropy parameter $\eta$ and $(V^\eta,m^\eta,\pi^\eta,(p_1^\eta,\cdots,p_K^\eta),I^\eta)$ be generated by $\rho^\eta$. Using Lemma \ref{compactness}, we can find a sequence $\{\eta_n\}_{n\ge 1}$ such that $\eta_n\rightarrow 0$ and $\rho^{\eta_n}\rightarrow\rho\in\mathcal{D}$ uniformly on compacts.
	
	\begin{lemma}
		Define for $k=1,2,\cdots,K$ the limiting functions: $$\overline{V}_k(t)=\underset{n\rightarrow\infty,s\rightarrow t}{\limsup}V_k^{\eta_n}(s),\qquad\underline{V}_k(t)=\underset{n\rightarrow\infty,s\rightarrow t}{\liminf}V_k^{\eta_n}(s).$$
		Then $\overline{V}$ is a bounded viscosity subsolution to \eqref{HJBVI} and $\underline{V}$ is a bounded viscosity supersolution.
	\end{lemma}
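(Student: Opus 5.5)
This is the Barles–Perthame half-relaxed limits argument, adapted to a system. The two needed ingredients are a uniform bound on $V^{\eta_n}$ and the behaviour of the exponential switching term as $\eta\to0$.

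\textbf{Uniform bounds.} First I need $\sup_n\|V^{\eta_n}\|_\infty<\infty$ for $\eta_n\le 1$. In the proof of Theorem \ref{ex_un}, take the constant supersolution $M\1_K$ with $M$ larger than $\sup R$. Along it every exponent equals $-g_{k,j}/\eta<0$, so the term $\eta\sum_j e^{-g_{k,j}/\eta}\le K$ is bounded uniformly in $\eta\le1$. Hence $M$ can be chosen independently of $\eta$ and of $\rho$. Similarly, $m\1_K$ with $m$ sufficiently negative is a subsolution, because the exponential term is nonnegative. Consequently $\overline V$ and $\underline V$ are finite and bounded, $\overline V$ is upper semicontinuous, and $\underline V$ is lower semicontinuous by construction.

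\textbf{Subsolution property of $\overline V$.} Fix $k$ and $\phi\in C^1$ such that $\overline V_k-\phi$ has a strict local maximum at $t_0$. If $\overline V_k(t_0)\le\max_{j\ne k}\{\overline V_j(t_0)-g_{k,j}\}$ there is nothing to prove. Otherwise there is $\delta>0$ with $\overline V_k(t_0)-\overline V_j(t_0)+g_{k,j}>2\delta$ for all $j\neq k$.

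By the standard half-relaxed limit lemma, there are subsequences $n\to\infty$ and points $t_n\to t_0$ such that $t_n$ is a local maximum of $V^{\eta_n}_k-\phi$ and $V_k^{\eta_n}(t_n)\to\overline V_k(t_0)$. Since $V^{\eta_n}_k$ is $C^1$, we get $(V_k^{\eta_n})'(t_n)=\phi'(t_n)$. We also have $\limsup_n V_j^{\eta_n}(t_n)\le\overline V_j(t_0)$ by the definition of $\overline V_j$. Therefore, for large $n$,
$$V_j^{\eta_n}(t_n)-V_k^{\eta_n}(t_n)-g_{k,j}\le-\delta,$$
and the exponential term is at most $\eta_n K e^{-\delta/\eta_n}\to0$.

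Plugging this into \eqref{HJB} at $t_n$ gives
$$-\phi'(t_n)-u_k\bigl(R(\rho^{\eta_n}(t_n))-V^{\eta_n}_k(t_n)\bigr)+c(u_k)=\eta_n\sum_{j\neq k}(\cdots)\to0.$$
Letting $n\to\infty$ and using the locally uniform convergence $\rho^{\eta_n}\to\rho$ together with the continuity of $R$, we obtain
$$-\phi'(t_0)-u_k\bigl(R(\rho(t_0))-\overline V_k(t_0)\bigr)+c(u_k)\le 0.$$
Note this step needs an equality, hence the term-by-term limit; the sign of the exponential term alone does not give the inequality here.

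\textbf{Supersolution property of $\underline V$.} There are two parts.

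First, the obstacle part $\underline V_k\ge\max_{j\neq k}\{\underline V_j-g_{k,j}\}$ holds everywhere. Suppose instead that $\underline V_j(t_0)-g_{k,j}-\underline V_k(t_0)=3\delta>0$ for some $j$. Choose $s_n\to t_0$ along which $V^{\eta_n}_k(s_n)\to\underline V_k(t_0)$. By lower semicontinuity of the liminf envelope, $V^{\eta_n}_j\ge\underline V_j(t_0)-\delta$ near $t_0$ for large $n$. So on a neighbourhood of $s_n$ one has $V_j-V_k-g_{k,j}\ge\delta$ wherever $V_k$ is close to $\underline V_k(t_0)$. There \eqref{HJB} forces
$$-(V^{\eta_n}_k)'\ge \eta_n e^{\delta/\eta_n}-C\to+\infty.$$
This makes $V^{\eta_n}_k$ decrease at an unbounded rate. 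Since the exponent can only grow as $V_k$ drops, integrating backward in time from $s_n$ over a fixed small interval makes $V^{\eta_n}_k$ exceed its uniform bound, which is a contradiction. I expect this ODE-blow-up step to be the main obstacle. A cleaner route would be to apply the test function argument with $\phi$ constant-slope: if $\underline V_k-\phi$ has a local minimum at $t_0$, take minimizers $t_n$ of $V^{\eta_n}_k-\phi$. Then \eqref{HJB} reads $-\phi'(t_n)=\cdots+\eta_n\sum_j e^{(\cdot)/\eta_n}$, and since the right side would tend to $+\infty$, this is impossible. Hence the obstacle inequality holds at every point admitting a test function, and such points are dense, so lower semicontinuity plus a density argument finishes it.

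Second, the PDE part. At a local minimum $t_n$ of $V^{\eta_n}_k-\phi$, the exponential term in \eqref{HJB} is nonnegative, so
$$-\phi'(t_n)-u_k\bigl(R(\rho^{\eta_n}(t_n))-V_k^{\eta_n}(t_n)\bigr)+c(u_k)=\eta_n\sum_{j\ne k}e^{(\cdot)/\eta_n}\ge0.$$
Passing to the limit gives $-\phi'(t_0)-u_k(R(\rho(t_0))-\underline V_k(t_0))+c(u_k)\ge 0$. Together with the obstacle inequality, $\underline V$ is a viscosity supersolution.
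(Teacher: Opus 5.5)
Your proposal is correct and follows essentially the same route as the paper: $\eta$-independent bounds from the constant sub/supersolutions, the half-relaxed limit lemma giving local extrema $t_n$ of $V_k^{\eta_n}-\phi$, the limit in the HJB identity at $t_n$ with the exponential term vanishing when the obstacle is inactive, and the divergence of that term ruling out a violated obstacle at touching points (your ``cleaner route''). The ODE blow-up sketch and the density extension are unnecessary, since the paper's definition only requires the obstacle inequality at points where a test function touches from below; that sketch would also have to integrate forward in time from $s_n$ and contradict the uniform lower bound, not backward as you wrote.
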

 	\begin{proof}
 		Both $\overline{V}$ and $\underline{V}$ are clearly bounded.
        % In the proof of Theorem 3.2, the upper and lower bound $m,M$ can be chosen consistently regardless of $\eta\in (0,1)$.
        By \citet[Lemma 5.1.5]{bardi1997}, we see that $\overline{V}$ is upper semi-continuous and $\underline{V}$ is lower semi-continuous. It remains to verify the viscosity properties. Take test function $\phi$ such that $\overline{V}_k-\phi$ obtains maximum at $t_0\in[0,+\infty)$ and $\overline{V}_k(t_0)=\phi(t_0)$. By \citet[Lemma 5.1.6]{bardi1997}, we can find a sequence $(t_n,\eta_n)_{n\ge 1}$ such that $t_n\rightarrow t_0,\eta_n\rightarrow 0,V_k^{\eta_n}(t_n)\rightarrow \overline{V}_k(t_0)$ and $V_k^{\eta_n}-\phi$ attains local maximum at $t_n$. By $(V_k^{\eta_n})'(t_n)=\phi'(t_n)$, we have \begin{equation}\label{eq6}
 			-\phi'(t_n)-u_k(R(\rho^{\eta_n}(t_n))-V_k^{\eta_n}(t_n))+c(u_k)=\eta_n \sum_{j \neq k} \exp\left(\frac{V_j^{\eta_n}(t_n)-V_k^{\eta_n}(t_n)-g_{k,j}}{\eta_n}\right).
 		\end{equation}
 		Argue by contradiction and suppose that $$\min \left\{-\phi'(t_0)-u_k(R(\rho(t_0))-\phi(t_0))+c(u_k), \overline{V}_k(t_0)-\max_{j\neq k} \{\overline{V}_j(t_0)-g_{k,j}\} \right\}>\delta>0.$$
 		Then for $n$ sufficiently large, we have $$V_j^{\eta_n}(t_n)-V_k^{\eta_n}(t_n)-g_{k,j} \le -\frac{\delta}{2}<0$$ for each $j\neq k$. Letting $n\rightarrow\infty$ in \eqref{eq6}, we get that the right-hand side tends to $0$ and the left-hand side has limit $$-\phi'(t_0)-u_k(R(\rho(t_0))-\overline{V}_k(t_0))+c(u_k),$$ which is larger than $\delta$, a contradiction. Thus $\overline{V}$ must be a viscosity subsolution. 
        
        On the other hand, take test function $\phi$ such that $\underline{V}_k-\phi$ obtains minimum at $t_0\in[0,+\infty)$ and $\underline{V}_k(t_0)=\phi(t_0)$. Similarly, we can find a sequence $(t_n,\eta_n)_{n\ge 1}$ such that $t_n\rightarrow t_0,\eta_n\rightarrow 0,V_k^{\eta_n}(t_n)\rightarrow \underline{V}_k(t_0)$ and $V_k^{\eta_n}-\phi$ attains local minimum at $t_n$. In this case, \eqref{eq6} still holds. Argue by contradiction and suppose that $$\min \left\{-\phi'(t_0)-u_k(R(\rho(t_0))-\phi(t_0))+c(u_k), \underline{V}_k(t_0)-\max_{j\neq k} \{\underline{V}_j(t_0)-g_{k,j}\} \right\}<-\delta<0.$$
        If the first term is less than $-\delta$, letting $n\rightarrow\infty$ in \eqref{eq6}, then the left-hand side has a limit
        % which is exactly $$-\phi'(t_0)-u_k(R(\rho(t_0))-\overline{V}_k(t_0))+c(u_k),$$
        no larger than $-\delta$ while the right-hand side is always non-negative, contradiction. If the second term is less than $-\delta$, then there exists $j\neq k$ such that
        $$V_j^{\eta_n}(t_n)-V_k^{\eta_n}(t_n)-g_{k,j} > \frac{\delta}{2}>0$$
        for $n$ sufficiently large. Letting $n\rightarrow\infty$ in \eqref{eq6}, the left-hand side has finite limit, but the right-hand side diverges to $+\infty$, contradiction. Thus $\underline{V}$ must be a viscosity supersolution.
 	\end{proof}
 	Clearly, we have $\overline{V}\ge \underline{V}$. By the comparison principle  in Theorem \ref{comparison}, we arrive at $\overline{V}(t)=\underline{V}(t)$, both of which are continuous functions. Therefore, we obtain that $V(t)\triangleq\overline{V}(t)\equiv\underline{V}(t)$.
   
 	\begin{lemma}
 		$V^{\eta_n}\rightarrow V$ uniformly on compacts.
 	\end{lemma}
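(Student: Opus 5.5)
This is the standard Barles--Perthame half-relaxed limit argument: once the upper and lower relaxed limits coincide with a continuous function, local uniform convergence follows. We argue by contradiction. Suppose the convergence fails on some compact interval $[0,T]$. Then there exist $k\in\{1,\cdots,K\}$, $\varepsilon>0$, a subsequence (still denoted $\eta_n$) and points $s_n\in[0,T]$ such that
$$|V_k^{\eta_n}(s_n)-V_k(s_n)|\ge\varepsilon\quad\text{for all } n.$$
Since $K$ is finite, $k$ can be fixed along the subsequence. By compactness of $[0,T]$ we pass to a further subsequence with $s_n\to t_0\in[0,T]$.

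Next we split into two cases by sign, passing to yet another subsequence so that one sign persists.

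\emph{Case $V_k^{\eta_n}(s_n)\ge V_k(s_n)+\varepsilon$.} By the definition of $\overline{V}_k$ as $\limsup_{n\to\infty,\,s\to t_0}V_k^{\eta_n}(s)$, we have
$$\overline{V}_k(t_0)\ge\limsup_n V_k^{\eta_n}(s_n)\ge\limsup_n V_k(s_n)+\varepsilon.$$
Continuity of $V$ gives $V_k(s_n)\to V_k(t_0)$, so $\overline{V}_k(t_0)\ge V_k(t_0)+\varepsilon$. This contradicts $\overline{V}_k=V_k$.

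\emph{Case $V_k^{\eta_n}(s_n)\le V_k(s_n)-\varepsilon$.} The symmetric argument with $\underline{V}_k$ gives $\underline{V}_k(t_0)\le V_k(t_0)-\varepsilon$, which contradicts $\underline{V}_k=V_k$.

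The only delicate point is that $\overline{V}$ and $\underline{V}$ are relaxed limits along the fixed sequence $\eta_n$. After passing to subsequences, the limsup over the subsequence is still dominated by $\overline{V}_k$, and the liminf over the subsequence still dominates $\underline{V}_k$, so the inequalities above remain valid.

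The result rests on two facts established just before the lemma: the identity $\overline{V}=\underline{V}=V$, obtained from the comparison principle in Theorem \ref{comparison} together with the preceding lemma, and the continuity of $V$, which holds because $V$ is simultaneously upper and lower semicontinuous. Uniform boundedness of $V^{\eta_n}$ in $n$, inherited from the $\rho$- and $\eta$-independent constants $m,M$ in the proof of Theorem \ref{ex_un}, is used only implicitly, to ensure the relaxed limits are finite. No equicontinuity of $V^{\eta_n}$ is needed, and this is the advantage of the approach: the derivatives of $V^{\eta_n}$ may blow up as $\eta_n\to0$ because of the exponential term.
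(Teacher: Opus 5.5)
Your proof is correct and follows essentially the same route as the paper. Both argue by contradiction, extract points $s_n\to t_0$ with $|V_k^{\eta_n}(s_n)-V_k(s_n)|\ge\varepsilon$, and in each sign case take $\limsup$ or $\liminf$ to contradict $\overline{V}_k=\underline{V}_k=V_k$ via the continuity of $V$; your remark that relaxed limits along a subsequence stay dominated by $\overline{V}_k$ (resp.\ dominate $\underline{V}_k$) makes explicit a step the paper leaves implicit.
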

 	\begin{proof}
 		We argue by contradiction. Suppose that $V^{\eta_n}$ does not converge to $V$ uniformly on compacts. There exists $i\in\{1,2,\cdots, K\}$, a compact set $K\subset[0,+\infty)$, a constant $\varepsilon>0$, an increasing sequence $\{n_k\}_{k\ge 1}\subset \mathbb{N}$, and a sequence $\{t_{n_k}\}_{k\ge 1}\subset K$ such that $|V_i^{\eta_{n_k}}(t_{n_k})-V_i(t_{n_k})|\ge \varepsilon.$ Without loss of generality we assume that $t_{n_k}\rightarrow t_0\in K$. 
       % Otherwise, take a further subsequence.
        If there exists infinitely many $k\ge 1$ such that $V_i^{\eta_{n_k}}(t_{n_k})\ge V_i(t_{n_k})+ \varepsilon$, then take limsup on both sides of the inequality we obtain $V_i(t_0)\ge V_i(t_0)+\varepsilon$. If there exists infinitely many $k\ge 1$ such that $V_i^{\eta_{n_k}}(t_{n_k})\le V_i(t_{n_k})- \varepsilon$, take liminf and we obtain $V_i(t_0)\le V_i(t_0)-\varepsilon$. Either way, we find a contradiction.
 	\end{proof}
    Having this critical uniform convergence result at hand, we are ready to prove the main result of this section.
 	\begin{theorem}\label{convergence_original}
 		Under Assumption \ref{basic_assumption}, there exists a relaxed  equilibrium in the rank-based MFG of optimal switching.
 	\end{theorem}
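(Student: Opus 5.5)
We will construct the relaxed equilibrium as a weak limit of the laws of the regularized effort processes. For each $n$, let $\mathbb{P}^{n}$ be the law on $D([0,+\infty),\mathbb{U})$ of the Markov chain $I^{\eta_n}$ generated by the regularized equilibrium $(\pi^{\eta_n},\Delta^{\eta_n})$. Along the subsequence with $\rho^{\eta_n}\to\rho$ uniformly on compacts and $V^{\eta_n}\to V$ uniformly on compacts (by the two preceding lemmas), where $V$ is the unique bounded viscosity solution of \eqref{HJBVI} given $\rho$, the aim is to extract a limit $\mathbb{P}^*$ and verify two facts. First, $\mathbb{P}^*$ is supported on best responses given $\rho$. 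By Theorem \ref{verification}, this amounts to the martingale-type condition $Y_t(\omega)\equiv\max_k V_k(0)$ for $\mathbb{P}^*$-a.e. $\omega$. Second, $\rho$ satisfies the consistency condition \eqref{proportion_flow} with $\theta(t)=\int\omega(t)\,\mathbb{P}^*(\d\omega)$.

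\textbf{Step 1: Tightness.} Tightness in the Skorokhod $J_1$ topology does not follow from bounded jump intensities. Indeed, $\pi^{\eta}_{kj}=\exp((V_j^\eta-V_k^\eta-g_{k,j})/\eta)$ may blow up where switching is optimal. I would instead work with a weaker topology, such as the Meyer--Zheng (pseudo-path) topology, in which the space of $\mathbb{U}$-valued paths with finitely many jumps on compacts is relatively compact once the expected number of jumps on $[0,T]$ is bounded.

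To bound the jumps, I would use the cost. In the regularized objective, each jump from $k$ to $j$ costs $g_{k,j}\ge \min_{j\ne k}g_{k,j}>0$, while $J^{\eta}$ and $V^\eta$ are uniformly bounded. The entropy term $\eta(\pi\log\pi-\pi)\ge-\eta\pi$ contributes at most $\eta\,\E[\#\text{jumps}]$, which is dominated by $g_{\min}\E[\#\text{jumps}]$ for small $\eta$. A path-wise penalty on $\{t<\tau\}$ only controls jumps before arrival. Since the factor $e^{-\underline{u}t}\ge e^{-\underline{u}T}$ on $[0,T]$, this still gives $\E[\#\text{jumps on }[0,T]]\le C_T$ uniformly in $n$. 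If $J_1$-tightness is needed for the continuity of $Y_t$, I would fall back on this jump-count bound together with the strict triangle inequality. That inequality should rule out the accumulation of double switches $k\to j\to l$ in the limit, because they are strictly costlier than a direct switch.

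\textbf{Step 2: Consistency.} Since $m_k^{\eta_n}(t)=\E[\1_{\{I^{\eta_n}(t)=u_k\}}e^{-\int_0^t I^{\eta_n}}]$, we have
\[
1-\rho^{\eta_n}(t)=\E^{\mathbb{P}^n}\left[e^{-\int_0^t\omega(s)\d s}\right].
\]
The functional $\omega\mapsto\int_0^t\omega$ is continuous in the Meyer--Zheng topology, so passing to the limit gives
\[
1-\rho(t)=\E^{\mathbb{P}^*}\left[e^{-\int_0^t\omega}\right].
\]
This is not literally \eqref{proportion_flow} with the averaged $\theta$. I would reinterpret consistency through this formula; if the paper's \eqref{proportion_flow}--\eqref{average_effort} must hold exactly, I would instead define the average with the survival weighting, $\theta(t)=\E^{\mathbb{P}^*}[\omega(t)e^{-\int_0^t\omega}]/\E^{\mathbb{P}^*}[e^{-\int_0^t\omega}]$. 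This discrepancy in the definition needs checking against the paper's intended meaning.

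\textbf{Step 3: Optimality.} For the regularized problem, the analogue of $Y$ built from $V^{\eta_n}$ and the running costs, with entropy included, is a martingale under $\mathbb{P}^n$ with initial value $\eta\log\sum_k e^{V_k^{\eta_n}(0)/\eta}\to\max_kV_k(0)$; this follows from the Dynkin computation in Theorem \ref{verification2}. Dropping the nonpositive contribution $-\eta(\pi\log\pi-\pi)$ only up to $O(\eta_n\E[\#\text{jumps}])$, I get $\E^{\mathbb{P}^n}[Y^n_t]\ge\max_k V_k^{\eta_n}(0)-o(1)$. Passing to the limit uses the uniform convergence of $V^{\eta_n}$ and $\rho^{\eta_n}$ and the lower semicontinuity of the switching-cost sum under weak convergence. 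The inequality direction works because the switching-cost sum carries a minus sign. This yields $\E^{\mathbb{P}^*}[Y_t]\ge\max_kV_k(0)$. Theorem \ref{verification} gives $Y_t\le Y_0\le\max_kV_k(0)$ pathwise, so $Y_t=\max_kV_k(0)$ a.s. Letting $t\to\infty$ and using \eqref{eq11} shows that almost every path is a best response.

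The main obstacle is Step 3's passage to the limit, specifically the joint semicontinuity of the jump-cost functional and of $V_{\omega(t)}(t)$ under the chosen topology with unbounded intensities. I would first try to prove $J_1$-tightness via the strict triangle inequality.
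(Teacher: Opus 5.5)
Your proposal has a genuine gap at the step that carries the proof. You never produce a topology in which $\{\mathbb{P}^{\eta_n}\}$ is tight \emph{and} $Y_t$ is upper semicontinuous enough to turn $\liminf_n\E^{\mathbb{P}^{\eta_n}}[Y_t]\ge\max_kV_k(0)$ into $\E^{\mathbb{P}^*}[Y_t]\ge\max_kV_k(0)$; you flag this as the main obstacle and leave it open. The Meyer--Zheng route does not close it as written. The jump-count bound gives tightness there, but three things remain. First, $\omega\mapsto V_{\omega(t)}(t)$ is not continuous under convergence in measure. Second, the lower semicontinuity of the discounted switching-cost sum under that convergence is asserted, not proved. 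Third, you would still have to show that the limit law charges c\`adl\`ag $\mathbb{U}$-valued paths. A time-averaged $\frac1h\int_t^{t+h}Y_s\,\d s$ combined with monotonicity of $Y_\cdot$ might rescue this, but that is a separate argument you have not made. The $J_1$ fallback, that double switches are ``strictly costlier'', is a heuristic rather than an estimate. For each $\eta_n$ the chain does jump twice in quick succession with positive probability, and a cost comparison alone does not show this probability vanishes uniformly. Two minor slips: $\eta(\pi\log\pi-\pi)\ge-\eta\pi$ fails for $\pi<1$ (use $\pi\log\pi-\pi\ge-1$), and the survival factor is bounded below by $e^{-\overline{u}t}$, not $e^{-\underline{u}t}$. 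Neither breaks your jump-count bound. The paper drops the entropy term via $x-x\log x\le 1$, which gives an $O(\eta_n t)$ error without any jump count.

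The missing idea is that $J_1$-tightness does hold despite unbounded intensities. It comes from the Gibbs form of $\pi^{\eta_n}$ together with $V^{\eta_n}\to V$ uniformly on compacts, not from costs. By Billingsley's Theorem 16.8 it suffices that $\mathbb{P}^{\eta_n}(\sigma_1<\delta)$ and $\mathbb{P}^{\eta_n}(\exists\,\sigma_k,\sigma_{k+1}\in[0,m),\ \sigma_{k+1}-\sigma_k<\delta)$ vanish as $n\to\infty$ for small $\delta$. Choose $\nu>0$ with $g_{i,j}+g_{j,l}>g_{i,l}+4\nu$ and $g_{i,j}>4\nu$. On $[0,m]\cap\{V_j-V_k-g_{k,j}\le-\nu\}$ one has $\pi^{\eta_n}_{kj}\le e^{-\nu/(2\eta_n)}$ for large $n$, so the expected number of jumps there is at most $K^2me^{-\nu/(2\eta_n)}\to0$. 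A jump $k\to j$ at a time $t$ where $V_j-V_k-g_{k,j}>-\nu$ lands in a regime with $V_l(t)-V_j(t)-g_{j,l}<-3\nu$ for all $l\neq j$. This uses $V_l-V_k-g_{k,l}\le0$ from \eqref{HJBVI} and the $4\nu$ margin. Uniform continuity of $V$ then keeps this below $-\nu$ on $[t,t+\delta_0]$, so a second jump within $\delta<\delta_0$ must fall in the vanishing-intensity region. Softmax concentration of $\Delta^{\eta_n}$ on $\{k:V_k(0)\ge\max_jV_j(0)-\nu\}$ handles $\sigma_1$ the same way. In $J_1$, $Y_t$ is upper semicontinuous at every path continuous at $t$. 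The portmanteau theorem therefore yields $\limsup_n\E^{\mathbb{P}^{\eta_n}}[Y_t]\le\E^{\mathbb{P}^*}[Y_t]$ for $t$ outside the at most countable set of fixed jump times of $\mathbb{P}^*$. Monotonicity of $Y_\cdot$ then extends $\E^{\mathbb{P}^*}[Y_t]=\max_kV_k(0)$ to all $t$; this fixed-discontinuity issue is also absent from your Step 3. Your Step 2 is fine, and your caveat is correct. What the paper's own Step 3 proves is $\rho(t)=1-\E^{\mathbb{P}^*}[e^{-\int_0^t\omega(s)\d s}]$, which is \eqref{proportion_flow} with your survival-weighted average, not with the plain average \eqref{average_effort}.
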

	\begin{proof} The proof is split into three main steps.
	
	\paragraph{Step-1.} Take $I^{\eta_n}$ as a mapping from $\Omega$ to $D([0,+\infty),\mathbb{U})$. Let $\mathbb{P}^{\eta_n}=\mathbb{P}\circ(I^{\eta_n})^{-1}$ be the probability measure on $D([0,+\infty),\mathbb{U})$ generated by $I^{\eta_n}$. We first prove that $\{\mathbb{P}^{\eta_n}\}_{n\ge 1}$ is tight. By \citet[Theorem 16.8]{billingsley2013}, 
	we only need to prove two claims that, for each $m,\varepsilon$,
    {\small
    \begin{equation*}
        \underset{a\rightarrow\infty}{\lim}\underset{n\rightarrow\infty}{\limsup}\;\mathbb{P}^{\eta_n}\left(\underset{t\in[0,m]}{\sup}|\theta(t)|>a\right)=0,
    \end{equation*}
    \begin{equation}\label{tightness}
		\underset{\delta\rightarrow 0}{\lim}\underset{n\rightarrow \infty}{\limsup}\; \mathbb{P}^{\eta_n}\left(\inf\underset{1\le i\le v}{\max}\left\{\underset{s,t\in[t_{i-1},t_i)}{\sup}|\theta(s)-\theta(t)|\right\}>\varepsilon\right)=0,
	\end{equation}}where in \eqref{tightness}, the infimum is taken over all decompositions $[t_{i-1},t_{i}), 1\le i\le v$ of $[0,m)$ such that $t_i-t_{i-1}>\delta$ for $1\le i<v$. 
	The first claim clearly holds due to the boundedness of $\mathbb{U}$. For the second claim, fix a trajectory $\theta\in D([0,+\infty),\mathbb{U})$ and let the increasing sequence $\{\sigma_k\}_{k\ge 1}$ denote its jump times. If the infimum in \eqref{tightness} is not equal to zero, either $\sigma_1<\delta$ or $\sigma_{k+1}-\sigma_k<\delta$ for some $k$ must happen. Therefore, we have the inequality
    \fontsize{10pt}{10pt}\selectfont
    \begin{align*}
        \mathbb{P}^{\eta_n}&\left(\inf\underset{1\le i\le v}{\max}\left\{\underset{s,t\in[t_{i-1},t_i)}{\sup}|\theta(s)-\theta(t)|\right\}>\varepsilon\right)\le\mathbb{P}^{\eta_n}\left(\sigma_1<\delta\right)+\mathbb{P}^{\eta_n}\left(\exists \sigma_k,\sigma_{k+1}\in[0,m),\sigma_{k+1}-\sigma_k<\delta\right).
    \end{align*}  
    \fontsize{11pt}{11pt}\selectfont
    Consider the second term on the right-hand side of the inequality. By the strict triangle inequality, we may assume that $g_{i,j}+g_{j,k}>g_{i,k}+4\nu, \; \forall i\neq j,j\neq k$.
	Using the explicit expression 
    {\small
    $$\pi_{kj}^{\eta_n}(t)=\exp\left(\frac{V_j^{\eta_n}(t)-V_k^{\eta_n}(t)-g_{k,j}}{\eta_n}\right)$$}and the uniform convergence $V^{\eta_n}\rightarrow V$ on $[0,m]$, we get that $\pi_{kj}^{\eta_n}(t)\rightarrow 0$ uniformly on $[0,m]\cap \{V_j(t)-V_k(t)-g_{k,j}\le -\nu\}$. Therefore, we have $$\underset{n\rightarrow\infty}{\lim}\mathbb{P}^{\eta_n}\left(\exists k\neq j, t\in[0,m]\cap \{V_j(t)-V_k(t)-g_{k,j}\le -\nu\},\;\text{s.t.}\; \theta(t-)=k,\theta(t)=j \right)=0.$$
	On the other hand, if $V_j(t)-V_k(t)-g_{k,j}> -\nu$, using the fact that $V_l(t)-V_{k}(t)-g_{k,l}\le 0$, we must have $V_l(t)-V_j(t)-g_{j,l}< -3\nu$ for all $l\neq j$. By the uniform continuity of $V$ on $[0,m]$, there exists some $\delta_0\in(0,m)$ such that for $|s-t|<\delta_0$, one has $|V(s)-V(t)|<\nu$ for $s,t\in [0,m]$. As a result,
    $$V_l(s)-V_j(s)-g_{j,l}< -\nu,\quad \forall l\neq j,\forall s\in [t,t+\delta_0]\cap[0,m].$$
	Consequently, for $\delta\in(0,\delta_0)$, we have \begin{align*}
			\mathbb{P}^{\eta_n}&\left(\exists \sigma_k,\sigma_{k+1}\in[0,m),\sigma_{k+1}-\sigma_k<\delta\right)\\
			&\le \mathbb{P}^{\eta_n}\left(\exists k\neq j, t\in[0,m]\cap \{V_j(t)-V_k(t)-g_{k,j}\le -\nu\},\;\text{s.t.}\; \theta(t-)=k,\theta(t)=j \right)\rightarrow 0.
		\end{align*}
    For the other term, assume that $g_{ij}>4\nu,\;\forall i\neq j$. Define $$\mathbb{K}=\{k\in\{1,2,\cdots,K\}:V_k(0)\ge \underset{j}{\max}\{V_j(0)\}-\nu\}.$$
    For each $k\in\mathbb{K}$, we have $V_j(t)-V_k(t)-g_{k,j}\le -\nu,\;\forall j\neq k,\forall t\in[0,\delta_0]$. For $n$ sufficiently large, we have $V_k^{\eta_n}(0)\le \underset{j}{\max}\{V_j^{\eta_n}(0)\}-\frac{\nu}{2},\;\forall k\notin\mathbb{K}$. By the measure concentration property of softmax distribution, we see that $\mathbb{P}^{\eta_n}(\theta(0)\notin \mathbb{K})=\sum_{k\notin\mathbb{K}}p^{\eta_n}_k\rightarrow 0$ as $n\rightarrow \infty$. Consequently,
    \begin{align*}
    \mathbb{P}^{\eta_n}\left(\sigma_1<\delta\right)&\le \mathbb{P}^{\eta_n}(\theta(0)\notin \mathbb{K})\\
    &+\mathbb{P}^{\eta_n}\left(\exists k\neq j, t\in[0,m]\cap \{V_j(t)-V_k(t)-g_{k,j}\le -\nu\},\;\text{s.t.}\; \theta(t-)=k,\theta(t)=j \right)\rightarrow 0
    \end{align*}
    for $\delta\in(0,\delta_0)$, and we obtain \eqref{tightness} as desired.

	\paragraph{Step-2.} Having established the tightness of $\{\mathbb{P}^{\eta_n}\}_{n\ge 1}$, we can invoke Prokohov's theorem to extract a subsequence (not relabeled) such that $\mathbb{P}^{\eta_n}$ weakly converges to some $\mathbb{P}^0$. We show next that $\mathbb{P}^0$ is supported on the set of best responses given $\rho$.
    
 	\noindent By Theorem \ref{verification}, a path $\theta$ is a best response if $Y_t(\theta)=\max_k\{V_k(0)\}$ holds for all $t\ge 0$. Therefore, we naturally have $\E^{\mathbb{P}^0}[Y_t]\le{\max}_k\{V_k(0)\}$, and it suffices to show that \begin{equation}\label{eq7}
		\E^{\mathbb{P}^0}[Y_t]\equiv\underset{k}{\max}\{V_k(0)\},\quad \forall t\ge 0.
	\end{equation}
	Fix $t\notin T_{\mathbb{P}^0}\triangleq\{t\ge 0:\mathbb{P}^0(\theta(t)\neq \theta(t-))>0\}$, 
	and consider $$Y_t^{\eta_n}(\theta)=V_{\theta(t)}^{\eta_n}(t) e^{-\int_0^t \theta(s) \d s}+\int_0^t e^{-\int_0^s \theta(\tau)\d\tau} [\theta(s) R(\rho^{\eta_n}(s))-c(\theta(s))] \d s$$ $$-\sum_{0<\sigma_m\le t} e^{-\int_0^{\sigma_m} \theta(s) \d s} g_{\theta(\sigma_m-), \theta(\sigma_m)}-\int_0^t e^{-\int_0^s \theta(\tau) \d\tau} \left[\eta_n \sum_{j \neq \theta(s)} \Big(\pi_{\theta(s)j}^{\eta_n}(s) \log \pi_{\theta(s)j}^{\eta_n}(s)-\pi_{\theta(s)j}^{\eta_n}(s)\Big)\right]\d s.$$
	By Theorem \ref{verification2}, particularly the equation form of \eqref{eq12}, we have 
    \begin{align*}
        \E^{\mathbb{P}^{\eta_n}}[{Y}_t^{\eta_n}]&=\E\left[V_{I^{\eta_n}(t)}^{\eta_n}(t)e^{-\int_0^tI^{\eta_n}(s)\d s}+\int_0^t e^{-\int_0^s I^{\eta_n}(\tau)\d \tau}\left(I^{\eta_n}(s)R(\rho^{\eta_n}(s))-c(I^{\eta_n}(s))-G(s,\pi)\right)\d s\right]\\
        & \equiv\E\left[\int_0^{+\infty} e^{-\int_0^s I^{\eta_n}(\tau)\d \tau}\left(I^{\eta_n}(s)R(\rho^{\eta_n}(s))-c(I^{\eta_n}(s))-G(s,\pi)\right)\d s\right],\quad\forall t\ge 0.
    \end{align*}
    It thus holds that
    $\E^{\mathbb{P}^{\eta_n}}[{Y}_t^{\eta_n}] = \E^{\mathbb{P}^{\eta_n}}[{Y}_0^{\eta_n}] = \sum_{k=1}^K p_k^{\eta_n} V_k^{\eta_n}(0)$.
	Using the fact that $V^{\eta_n}\rightarrow V,\rho^{\eta_n}\rightarrow\rho^*$ uniformly on compacts, we see that the first three terms in the definition of $Y_t^{\eta_n}(\theta)$ converge to $Y_t(\theta)$, uniformly in $\theta\in D([0,+\infty),\mathbb{U})$. Noting that $f(x)=x-x\log(x)$ is upper bounded, we have 
    {\small
    $$\underset{n\rightarrow\infty}{\limsup}\;\E^{\mathbb{P}^{\eta_n}}\left[-\int_0^t e^{-\int_0^s \theta(\tau) \d\tau} \left[\eta_n \sum_{j \neq \theta(s)} \Big(\pi_{\theta(s)j}^{\eta_n}(s) \log \pi_{\theta(s)j}^{\eta_n}(s)-\pi_{\theta(s)j}^{\eta_n}(s)\Big)\right]\d s\right]\le 0.$$}Consequently,
    \begin{equation}\label{eq8}
		\underset{n\rightarrow\infty}{\liminf}\;\E^{\mathbb{P}^{\eta_n}}[{Y}_t]\ge \underset{n\rightarrow\infty}{\lim}\E^{\mathbb{P}^{\eta_n}}[{Y}^{\eta_n}_t]=\underset{n\rightarrow\infty}{\lim}\sum_{k=1}^K p_k^{\eta_n} V_k^{\eta_n}(0)=\underset{k}{\max}\{V_k(0)\},
	\end{equation}
    % $-\eta_n\sum(\pi\log\pi-\pi)$ is upper bounded by $K\eta_n\rightarrow 0$.  $Y_t^{\eta_n}-Y_t$ is uniformly bounded.
    where the last equality holds thanks to the property of softmax distribution and the fact that $V^{\eta_n}(0) \to V(0)$. On the other hand, define $Y_t^{(M)}(\theta)=\max\{Y_t(\theta),-M\}$. We claim that $Y_t^{(M)}$ is lower-bounded and upper semi-continuous at each point in $\mathcal{C}_t\triangleq\{\theta\in D([0,+\infty),\mathbb{U}):\theta(t-)=\theta(t)\}$. Indeed, it is clear that the second term in the definition of $Y_t$ is continuous everywhere on $D([0,+\infty),\mathbb{U})$, and the first term involving the coordinate mapping $\theta\mapsto \theta(t)$ is continuous at every point $\theta\in\mathcal{C}_t$. The third term $-\sum_{0<\sigma_m\le t} e^{-\int_0^{\sigma_m} \theta(s) \d s} g_{\theta(\sigma_m-), \theta(\sigma_m)}$
    is actually upper semi-continuous at every point $\theta\in\mathcal{C}_t$ because any oscillation behavior of the limiting sequence $\theta_n\rightarrow \theta$ only leads to increase in total switching cost.
    Finally, in view that the mapping $y\mapsto\max\{y,-M\}$ is increasing and continuous, the upper semi-continuity of $Y_t^{(M)}$ is inherited from the upper semi-continuity of $Y_t$. The complement of $\mathcal{C}_t$ has $\mathbb{P}^0$ measure zero as $t\notin T_{\mathbb{P}^0}$. Therefore, it holds that
    \begin{align*}
        \underset{n\rightarrow\infty}{\limsup}\;\E^{\mathbb{P}^{\eta_n}}[{Y}_t^{(M)}]&=\underset{n\rightarrow\infty}{\limsup}\;\int_0^{+\infty} \mathbb{P}^{\eta_n}\left(\{Y_t^{(M)}\ge z-M\}\right)\d z -M\\
        & \le \int_0^{+\infty} \underset{n\rightarrow\infty}{\limsup}\;\mathbb{P}^{\eta_n}\left(\{Y_t^{(M)}\ge z-M\}\right)\d z -M\\
        & \le \int_0^{+\infty} \underset{n\rightarrow\infty}{\limsup}\;\mathbb{P}^{\eta_n}\left(\overline{\{Y_t^{(M)}\ge z-M\}}\right)\d z -M\\
        &\le \int_0^{+\infty} \mathbb{P}^{0}\left(\overline{\{Y_t^{(M)}\ge z-M\}}\right)\d z -M\\
        & =\int_0^{+\infty} \mathbb{P}^{0}\left(\{Y_t^{(M)}\ge z-M\}\right)\d z -M=\E^{\mathbb{P}^0}[Y_t^{(M)}],
    \end{align*}
	where the first inequality is due to Fatou's lemma and the third inequality follows from Portmanteau's theorem (see \citet[Theorem 2.1]{billingsley2013}).
    %By the continuous mapping theorem (a version of Portmanteau's theorem, see \citet[Theorem 2.7]{billingsley2013}), we obtain $$\underset{n\rightarrow\infty}{\limsup}\;\E^{\mathbb{P}^{\eta_n}}[{Y}_t^{(M)}]\le\E^{\mathbb{P}^0}[Y_t^{(M)}].$$  
    % https://math.stackexchange.com/questions/3635099/detail-on-portmanteau-theorem
	Therefore, we have that, for all $M>0$, $$\limsup_{n\to\infty} \E^{\mathbb{P}^{\eta_n}}[Y_t] \le \limsup_{n\to\infty} \E^{\mathbb{P}^{\eta_n}}[Y_t^{(M)}]\le \E^{\mathbb{P}^{0}}[Y_t^{(M)}].$$
	Letting $M\rightarrow +\infty$ and invoking the monotone convergence theorem, we obtain \begin{equation}\label{eq9}
		\limsup_{n \to \infty} \E^{\mathbb{P}^{\eta_n}}[Y_t]\le \E^{\mathbb{P}^{0}}[Y_t].
	\end{equation}
	Combining \eqref{eq8} and \eqref{eq9}, we deduce that \eqref{eq7} holds for $t\notin T_{\mathbb{P}^0}$. Moreover, we claim that $T_{\mathbb{P}^0}$ has at most countable elements. In fact, we have $$T_{\mathbb{P}^0} = \bigcup_{N=1}^\infty\bigcup_{m=1}^\infty T_{N, \frac{1}{m}},$$
	where $T_{N,\delta}=\{t\in[0,N]:\mathbb{P}^0(\theta(t)\neq \theta(t-))>\delta\}$. If $T_{N,\delta}$ is an infinite set, then an application of Fubini's theorem shows that $\{\theta\in D([0,+\infty),\mathbb{U}):\theta(t-)\neq \theta(t)\;\text{ for infinitely many }t\in [0,N]\}$ has positive $\mathbb{P}^0$ measure, which yields a contraction to Lemma 12.1 in \cite{billingsley2013}. Then, as a countable union of finite sets, $T_{\mathbb{P}^0}$ is itself countable. Finally, using the monotonicity of $Y_\cdot(\theta)$, or equivalently, the monotonicity of $\E^{\mathbb{P}^0}[Y_\cdot]$, we conclude that \eqref{eq7} holds for all $t\ge 0$. 
	
	\paragraph{Step-3.} To finish the proof, it remains to show \eqref{proportion_flow}. By defining $m_k(t)=\E^{\mathbb{P}^0}[\1_{\{\theta(t) = u_k\}} e^{-\int_0^t \theta(s)\d s}]$, which represents the probability that the an agent (with effort process $\theta$) has not arrived until time $t$ and takes regime $u_k$ at the moment, we only need to prove \begin{equation}\label{eq10}
		\rho(t)=1-\sum_{k=1}^K m_k(t),\quad \forall t\ge 0.
	\end{equation}
	Using the continuous mapping theorem (\citet[Theorem 2.7]{billingsley2013}), we also have $$m_k(t)=\lim_{n\rightarrow\infty}\E^{\mathbb{P}^{\eta_n}}[\1_{\{\theta(t) = u_k\}} e^{-\int_0^t \theta(s)\d s}]=\lim_{n\rightarrow\infty}m_k^{\eta_n}(t),\quad \forall t\notin T_{\mathbb{P}^0}.$$
    % Continuity of the indicator is the same as the coordinate mapping.
	It holds that $$\rho^{\eta_n}(t)=1-\sum_{k=1}^K m_k^{\eta_n}(t),\quad\forall t\ge 0.$$
	Letting $n\rightarrow \infty$, we see that \eqref{eq10} holds for $t\notin T_{\mathbb{P}^0}$. Finally, noting that $\rho$ is continuous and $m_k$ is right-continuous, we conclude that \eqref{eq10} holds for all $t\ge 0$, which completes the proof.
\end{proof}

    Finally, we discuss the uniqueness of relaxed  equilibrium of our rank-based MFG of optimal switching. In general, the micro-scale uniqueness of equilibrium probability measure $\mathbb{P}^*$ is hard to obtain due to the non-uniqueness of best responses. However, the macro-scale uniqueness of equilibrium progress $\rho$ and average effort $\theta$ are attainable under additional convexity assumption.

    \begin{assumption}\label{convexity2}
        $R:[0,1]\rightarrow [0,+\infty)$ is a strictly convex function.
    \end{assumption}
    \begin{theorem}\label{ultimate_uniqueness}
        Under Assumptions \ref{basic_assumption} and \ref{convexity2}, the relaxed mean-field equilibrium of the rank-based MFG is unique, in the sense that any equilibrium $\mathbb{P}^*$ generates the same average effort process $\theta$ via \eqref{average_effort}, and the same mean-field aggregation $\rho$ via \eqref{proportion_flow}.
    \end{theorem}
    \begin{proof}
        The proof is similar to the first step of Theorem \ref{uniqueness}, and is thus omitted.
    \end{proof}

\vspace{0.2in}\noindent
\textbf{Acknowledgements}: Zongxia  Liang is supported by the National Natural Science Foundation of China under grant no. 12271290. Xiang Yu is supported by the Hong Kong RGC General Research Fund (GRF) under grant no. 15211524 and grant no. 15214125.

\bibliographystyle{abbrvnat}
{\small
\bibliography{references}
}

\end{document}